\newcommand{\ignore}[1]{}
\newcommand{\vek}[1]{\mathchoice{\displaystyle\boldsymbol{#1}}
{\textstyle\boldsymbol{#1}}{\scriptstyle\boldsymbol{#1}}
{\scriptscriptstyle\boldsymbol{#1}}}
\newcommand{\mat}[1]{\mathchoice{\displaystyle\mathbf{#1}}
{\textstyle\mathbf{#1}}{\scriptstyle\mathbf{#1}}
{\scriptscriptstyle\mathbf{#1}}}
\newcommand{\di}{\mathrm{d}}
\definecolor{myred}{rgb}{1, 0.2, 0.2}
\newlength\figureheight
\newlength\figurewidth
\DeclareMathOperator{\trace}{trace}
\newtheorem{theorem}{Theorem}[section]
\newtheorem{lemma}[theorem]{Lemma}
\newtheorem{proposition}[theorem]{Proposition}
\newtheorem{remark}[theorem]{Remark}
\newcommand{\spa}[1]{\operatorname{span}#1}
\newcommand{\inner}[2]{\left\langle #1,#2 \right\rangle}
\newcommand{\norm}[1]{\left\|#1\right\|}
\newcommand{\argmin}[1]{\arg\min_{#1}}
\newcommand{\seqt}[2]{\{ #1 \}_{#2}}
\newcommand{\spat}[1]{\operatorname{span}#1}
\newcommand{\innert}[2]{\langle #1,#2 \rangle}
\newcommand{\normt}[1]{\|#1\|}
\newcommand{\bN}{\mathbb{N}}
\newcommand{\bR}{\mathbb{R}}
\newcommand{\cC}{\mathcal{C}}
\newcommand{\cP}{\mathcal{P}}
\newcommand{\cQ}{\mathcal{Q}}
\newcommand{\cS}{\mathcal{S}}
\newcommand{\cU}{\mathcal{U}}
\newcommand{\cX}{\mathcal{X}}
\newcommand\rev[1]{{#1}}
\newcommand{\authorhgm}{\authorcr Hermann G. Matthies}
\newcommand{\authordl}{Dishi Liu}
\newcommand{\authorlg}{Lo\"{i}c Giraldi}
\newcommand{\authoran}{Anthony Nouy}
\newcommand{\affilecn}{\'Ecole Centrale de Nantes, GeM UMR 6183,
  Nantes, France}
\newcommand{\affilwire}{Institute of Scientific Computing, Technische
  Universit\"at Braunschweig, Brunswick, Germany}
\newcommand{\affildlr}{Institute of Aerodynamics and Flow Control,
  German Aerospace Center (DLR), Brunswick, Germany}
\newcommand{\thetitle}{To be or not to be intrusive?\\
The solution of parametric \\
and stochastic equations\\
--- Proper Generalized Decomposition}
\begin{document}

\title{\thetitle\thanks{This work was partly supported by the Deutsche
          Forschungsgemeinschaft (DFG) and by the French National
          Research Agency (Grant ANR CHORUS MONU-0005)}}
\author[a]{\authorlg}
\author[b]{\authordl}
\makeatletter
\author[c]{\authorhgm}
\makeatother
\author[a]{\authoran\thanks{Corresponding author: \texttt{Anthony.Nouy@ec-nantes.fr}}}

\affil[a]{\affilecn}
\affil[b]{\affildlr}
\affil[c]{\affilwire}

\maketitle

\begin{abstract}
  A numerical method is proposed to compute a low-rank Galerkin approximation
  to the solution of a parametric or stochastic equation in a
  non-intrusive fashion. The considered nonlinear problems are
  associated with the minimization of a parameterized differentiable
  convex functional. We first introduce a bilinear parameterization of
  fixed-rank tensors and employ an alternating minimization scheme for
  computing the low-rank approximation. In keeping with the idea of
  non-intrusiveness, at each step of the algorithm the minimizations
  are carried out with a quasi-Newton method to avoid the computation
  of the Hessian. The algorithm is made non-intrusive through the use
  of numerical integration. It only requires the evaluation of
  residuals at specific parameter values. The algorithm is then
  applied to two numerical examples.

  \vspace{5mm}
  {\noindent\textbf{Keywords:} parametric stochastic equation,
    Galerkin approximation, non-intrusive method, low-rank
    approximation, alternating minimization algorithm, quasi-Newton
    method, Proper Generalized Decomposition}

  \vspace{5mm}
  {\noindent\textbf{Classification:} 65K10, 65D30, 65M70, 15A69, 60H35}
\end{abstract}

\section{Introduction}
\label{sec:introduction}

We are interested in computing the solution of a stochastic parametric
equation. In the literature, methods are said to be non-intrusive when
they require simple calls to the deterministic solver to compute
samples of the solution. We can cite for instance approaches based on
Monte-Carlo, collocation, or $L^2$-projection methods \cite{Nouy2009}.
On the other hand, Galerkin-type methods \cite{Matthies2005} are often
considered as intrusive, as the Galerkin conditions lead to a coupled
system of equations \cite{Giraldi2013,Matthies2005} implying that the
original software for the fixed parameter case can not be used and
requires modification. However, in \cite{Giraldi2013} it was shown
that --- in analogy to the \emph{partitioned} solution of coupled
problems --- it is possible to solve the usual Galerkin equations
non-intrusively in the parametric case by making use of the
``deterministic'' solver, i.e.\ the solver for a fixed value of the
parameters. Recent methods to compute a low-rank approximation
\cite{Chinesta2011,Falco2011a} to such parametric or stochastic
problems also lead to Galerkin-type procedures. Here we want to show
that these methods too can be executed in a non-intrusive manner.

We want to represent the parametric solution $u(p)$ by an
approximation of the form
\begin{equation*}
  u(p) \approx \sum_{i=1}^r \lambda_i(p) v_i,
\end{equation*}
where the $v_i\in\cU$ are fixed vectors and the $\lambda_i(p)$ are real-valued
functions of $p$, and hopefully the \emph{rank} $r$ is sufficiently
small. An obvious advantage of such a
decomposition is the reduction of the number of terms for the
representation of the solution. We also hope to reduce the
computational time for large parametric problems.

The Singular Value Decomposition (SVD) is the best known technique
for constructing a low-rank approximation. If the solution $u$ belongs
to the tensor product of Hilbert spaces, the best low-rank
approximation with respect to the canonical norm is the truncated SVD.
Unfortunately, straightforward computation of the SVD requires to know the
solution of the equation, and thus is not directly
applicable.

An alternative is to use an iterative solver coupled with a low-rank
approximation or truncation technique, leading to \emph{approximate
  iterations}. These methods \cite{Hackbusch2008} have already been
used \cite{Ballani2013,Kressner2011a,Matthies2012} in linear problems,
and could be extended to the iterative solver presented in
\cite{Giraldi2013} in a straightforward manner. 

Another technique called Proper Generalized Decomposition (PGD)
\cite{Chinesta2011,Falco2011a} computes a low-rank approximation of
the solution, relying on a Galerkin-type projection. We distinguish a
progressive and a direct computation of the approximation. The first
one consists in building the approximation in a greedy fashion, with
the computation of a rank one approximation at each iteration, while
the direct approach directly computes a fixed rank approximation in
one go. Such
fixed rank approximation can be computed with an alternating
minimization algorithm in an optimization context \cite{Kolda2009}.

Since the PGD relies on Galerkin-type projections, this method is
usually classified as intrusive. In the present paper, it is shown
that low-rank approximations can be computed in a non-intrusive
fashion by just evaluating residuals. A low-rank approximation is
found by alternating minimization of a convex functional. These
minimizations are carried out with a quasi-Newton technique --- we
choose the quasi-Newton BFGS algorithm here \cite{Dennis1983,Matthies2005} ---
which avoids the computation of the Hessian. As a consequence, the
proposed algorithm only requires evaluations of residuals --- the
negative gradient of the functional --- to compute the low-rank
approximation of the solution. \rev{The efficiency of the proposed approach is essentially related to the number of residual evaluations, which could be reduced by introducing structured approximations of the parameter-dependent residuals. The aim of the present paper is simply to show the feasibility of computing  low-rank Galerkin approximation of the solution in a non-intrusive fashion, based on simple evaluations of the residual for some parameter values.  Efficient implementations will be proposed in a future work.}

The outline of the paper is as follows. In Section
\ref{sec:parametric-problem} the parametric problem is introduced with
a special emphasis on the link between parametric-strong and
parametric-weak formulations. We give necessary conditions for these
problems to be well-posed. Section \ref{sec:basic_pgd} introduces the
different ingredients for computing a low-rank approximation of the
solution with a basic PGD method in a non-intrusive fashion via
numerical integration and the use of a BFGS technique. An improved
algorithm is presented in Section \ref{sec:impr-algor-nonl}. In
Section \ref{sec:numerical-examples}, the method is illustrated with
two numerical examples.

\section{Parametric problems}
\label{sec:parametric-problem}

We consider the parametric problem of finding
$u(p) \in \cU$ such that
\begin{equation}
  \label{eq:param_prob}
  A(u(p);p) = b(p), \quad p \in \cP,
\end{equation}
where $\cU$ is a Hilbert space and $\cP$ is a parameter set equipped
with a finite measure $\mu$ (e.g. probability measure), $A(\cdot;p):
\cU \rightarrow \cU$ and $b(p) \in \cU$. We identify $\cU$
with its dual and we denote by $\innert{\cdot}{\cdot}_{\cU}$ the inner
product on $\cU$ and $\normt{\cdot}_\cU$ the associated norm. A
Galerkin approximation of the solution map could be computed in a non
intrusive manner as in \cite{Giraldi2013}. In this work, we are
interested in finding a low-rank approximation of the solution. To do
so, we assume that Problem \eqref{eq:param_prob} derives from the
minimization of a functional $J(\cdot;p):\cU \ni v \mapsto J(v;p)
\in \bR$. A suitable framework is established with the following
theorem.

\begin{theorem}\label{th:param_framework}
  Assume that
  \begin{enumerate}[(a)]
  \item The map
    $v\mapsto J(v;p)$ is strongly convex uniformly in $p$, meaning
    that there exists a constant $\alpha >0$ independent of $p$ such
    that for all $v,w \in \cU$ and for all $t \in [0,1]$ we have
    $$J(tv+(1-t)w;p) \le t J(v;p) + (1-t) J(w;p) - \frac{\alpha}{2}
    t(1-t) \norm{v-w}_\cU^2,$$
  \item $v\mapsto J(v;p)$ is Fréchet differentiable with gradient
    \begin{equation*}
      \nabla J(v;p) = A(v;p)-b(p),
    \end{equation*}
  \item $p \mapsto A(0;p)-b(p)$ is square $\mu$-integrable,
  \item $p\mapsto J(v;p)$ is $\mu$-integrable and $v \mapsto
    A(v;p)-b(p)$ is Lipschitz uniformly in $p$ on bounded sets,
    meaning that for all bounded sets $\cS \subset \cU$, there exists a constant $K>0$ independent of $p$
    such that
    \begin{equation*}
      \norm{A(v;p)-A(w;p)}_\cU \le K \norm{v-w}_\cU, \quad \forall v,w
      \in \cS.
    \end{equation*}
  \end{enumerate}
  Then a solution of \eqref{eq:param_prob} exists and is unique for
  all parameters $p$ such that we can define a solution map
  $u:\cP\rightarrow \cU$. Moreover, $u$ is in $L^2(\cP;\cU)$, $u$ is
  the unique minimizer in $L^2(\cP;\cU)$ of the functional
  \begin{equation}\label{eq:JP}
    J_\cP:u\mapsto \int_\cP  J(u(p);p)\mu(\di p),
\end{equation}
 and is equivalently characterized by
  \begin{equation}\label{eq:critical_point}
    \int_\cP \inner{A(u(p);p)-b(p)}{\delta u(p)}_\cU \mu(\di p) = 0,
    \quad \forall \delta u \in L^2(\cP;\cU).
  \end{equation}
\end{theorem}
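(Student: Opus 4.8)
The plan is first to solve the problem for fixed $p$ by the direct method of the calculus of variations. Integrating the gradient in (b) and using strong convexity (a) gives the quadratic lower bound $J(v;p)\ge J(0;p)+\innert{A(0;p)-b(p)}{v}_\cU+\tfrac{\alpha}{2}\normt{v}_\cU^2$, so $J(\cdot;p)$ is coercive; convexity together with the continuity coming from Fréchet differentiability (b) gives weak lower semicontinuity. A minimizing sequence is therefore bounded, a weakly convergent subsequence exists, and its limit is a minimizer, unique by the strict convexity implied by (a). Vanishing of the derivative (b) shows this minimizer solves $A(u(p);p)=b(p)$, and conversely any solution is the unique critical point of the strictly convex $J(\cdot;p)$. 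This defines the solution map $u:\cP\to\cU$.

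\textbf{A priori bound and measurability.} Rewriting (a) as convexity of $v\mapsto J(v;p)-\tfrac{\alpha}{2}\normt{v}_\cU^2$ and invoking monotonicity of its gradient yields the strong monotonicity estimate $\innert{A(v;p)-A(w;p)}{v-w}_\cU\ge \alpha\normt{v-w}_\cU^2$. Taking $v=u(p)$, $w=0$ and using $A(u(p);p)=b(p)$ with the Cauchy--Schwarz inequality gives $\normt{u(p)}_\cU\le \alpha^{-1}\normt{A(0;p)-b(p)}_\cU$, so that square integrability of $u$ is immediate from (c). The delicate point is measurability of $p\mapsto u(p)$. Here I would note that (c) and (d) provide a Carathéodory structure: $v\mapsto A(v;p)-b(p)$ is continuous by (d), while for each fixed $v$ the map $p\mapsto A(v;p)-b(p)=\nabla J(v;p)$ is (weakly, hence strongly in the separable case) measurable, being recovered from difference quotients of the $p$-measurable functions $J(v;p)$ of (d). Since superposition with a Carathéodory map preserves measurability, the iterates of a damped gradient scheme $u_{k+1}(p)=u_k(p)-\tau\,(A(u_k(p);p)-b(p))$, which converges to $u(p)$ by strong monotonicity and the local Lipschitz bound (d), are measurable, and so is their pointwise limit $u$. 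Hence $u\in L^2(\cP;\cU)$.

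\textbf{Minimization over $L^2(\cP;\cU)$.} Because $u(p)$ minimizes $J(\cdot;p)$ for each $p$, we have $J(u(p);p)\le J(w(p);p)$ for every $w\in L^2(\cP;\cU)$ and a.e.\ $p$; integrating against $\mu$ gives $J_\cP(u)\le J_\cP(w)$, so $u$ minimizes $J_\cP$. Finiteness of $J_\cP(u)$ comes from the quadratic upper bound produced by integrating the gradient, combined with the a priori bound and (c)--(d), and uniqueness follows from the strong convexity of $J_\cP$ inherited from (a).

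\textbf{Variational characterization.} For the equivalence with \eqref{eq:critical_point} I would compute the Gateaux derivative of the convex functional $J_\cP$ on $L^2(\cP;\cU)$: differentiating under the integral sign, justified by (b) and the Lipschitz bound (d), shows that the derivative of $J_\cP$ at $w$ in a direction $\delta u$ equals $\int_\cP \innert{A(w(p);p)-b(p)}{\delta u(p)}_\cU\,\mu(\di p)$. The first-order optimality condition $\nabla J_\cP(u)=0$ for a convex differentiable functional is precisely \eqref{eq:critical_point} and is equivalent to $u$ being the minimizer; for the solution map the residual vanishes pointwise, so \eqref{eq:critical_point} is immediate, while the converse uses convexity. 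I expect the main obstacle to be the measurability of the solution map, together with the careful verification that $J_\cP$ and its derivative are well defined on all of $L^2(\cP;\cU)$, since the constant in (d) is uniform in $p$ only on a fixed bounded set whereas a generic $w\in L^2(\cP;\cU)$ need not have bounded range.
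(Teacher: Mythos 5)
Your proposal is correct in substance and shares the overall skeleton of the paper's Appendix~\ref{sec:proof} proof (pointwise well-posedness from strong convexity, the a priori bound $\normt{u(p)}_\cU \le \alpha^{-1}\normt{A(0;p)-b(p)}_\cU$ via strong monotonicity and Cauchy--Schwarz, then differentiation under the integral sign for the characterization), but it differs genuinely in two places. First, you establish that $u$ minimizes $J_\cP$ by the elementary pointwise comparison $J(u(p);p)\le J(w(p);p)$ integrated over $\cP$, whereas the paper instead proves strong convexity of $J_\cP$ (Lemma~\ref{lem:convex}) and G\^ateaux/Fr\'echet differentiability (Lemmas via dominated convergence and continuity of the derivative, Lemma~\ref{lem:frechet}) and only then identifies $u$ through the Euler--Lagrange equation \eqref{eq:critical_point}; your route gets minimality (and, by the same pointwise-uniqueness argument, uniqueness) more cheaply, with the differentiability machinery needed only for the equivalence with \eqref{eq:critical_point}, which you also carry out. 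Second, you explicitly treat the measurability of $p\mapsto u(p)$, which the paper passes over in silence; this fills a real gap, though your fix has a loose end: in the damped gradient iteration the admissible step size $\tau$ is dictated by the Lipschitz constant of $A(\cdot;p)$ on a ball whose radius $\alpha^{-1}\normt{A(0;p)-b(p)}_\cU$ depends on $p$, so $\tau$ must be taken $p$-dependent (and measurable), and one must verify the iterates remain in that ball. Finally, the obstacle you flag at the end is genuine and in fact also affects the paper's own argument: Lemma~\ref{lem:frechet} invokes assumption (d) for a ``bounded set'' in $\cQ\otimes\cU$ containing $u$ and $u+\delta u$, whereas (d) only supplies a $p$-uniform Lipschitz constant on bounded subsets of $\cU$, and the pointwise values of a generic element of $L^2(\cP;\cU)$ are not uniformly bounded in $p$; both your proof and the paper's implicitly require (d) in a stronger form (e.g.\ a global, or radially growing, Lipschitz bound).
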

  \begin{proof}
    See Appendix \ref{sec:proof}.
  \end{proof}
It is additionally assumed \cite{Giraldi2013} that an iterative solver
for \eqref{eq:param_prob} is available,
\[u^{(k+1)}(p) \gets u^{(k)}(p) + P^{-1}(R(u^{(k)}(p);p)), \]
convergent for all fixed values of $p$, where 
\[ R(u^{(k)}(p);p) := b(p) - A(u^{(k)}(p);p) \]
is the standard residual of \eqref{eq:param_prob}.
The linear map $P$ is a preconditioner, which may depend on $p$
and on the current iterate $u^{(k)}$; e.g.\ in Newton’s method
$P(u^{(k)};p) = \nabla A(u^{(k)};p)$ --- the Fr\'echet derivative
or gradient of $A(\cdot;p)$.
Usually $P$ is such that $P(u^{(k)};p)(\Delta u^{(k)}) = R(u^{(k)};p)$ 
is ``easy to solve'' for $\Delta u^{(k)} := u^{(k+1)}-u^{(k)}$.
In any case, we assume that for all arguments $p$ and $u$ the map
$v\mapsto P(v;p)$ is linear in $\Delta u$ and non-singular. One should
stress the fact that $P$ or $P^{-1}$ are never needed explicitly, only
their action onto a vector. We assume that the software
interface to the solver for \eqref{eq:param_prob} is such that one may
access the residual $R(u;p)$ without any modification of the software, i.e.\
\emph{non-intrusively}.

In a standard Galerkin method we introduce a finite dimensional space
in $L^2(\cP;\cU)$ where we look for an approximation of the
solution map. Its non-intrusive computation was treated in \cite{Giraldi2013}. Here we are interested in
computing a low-rank approximation of the solution of the form
\begin{equation}\label{eq:lr-form}
  u \approx u_r = \sum_{i=1}^r \lambda_i \otimes v_i, \quad \lambda_i
  \in \cQ, \ v_i \in \cU,
\end{equation}
in a non-intrusive manner, where $\cQ = L^2(\cP)$ is the space of square
$\mu$-integrable functions equipped with its natural inner product
$\innert{\cdot}{\cdot}_\cQ$ and associated norm
$\normt{\cdot}_\cQ$. The search for a low-rank approximation is
justified by the tensor product structure of
$L^2(\cP;\cU)$ which is identified isomorphically with the tensor
Hilbert space $\cQ \otimes \cU$ equipped with the induced canonical norm.

\section{Basic Proper Generalized Decomposition}
\label{sec:basic_pgd}

\subsection{Computation of the approximation}

The basic PGD technique consists in using a greedy rank one
approximation \cite{Falco2011a} for computing an approximation of the
solution of the form \eqref{eq:lr-form}. Assume that we already have
computed $u_r = \sum_{i=1}^r \lambda_i \otimes v_i$ and we want to find an
approximation $u_r + \lambda \otimes v$ of the solution. The couple
$(\lambda,v)$ is computed by solving the minimization problem
\begin{equation*}
  \min_{(\lambda,v)\in \cQ \times \cU} J_\cP(u_r+\lambda \otimes v).
\end{equation*}
The solution is computed using an alternating minimization algorithm.
For $\lambda$ fixed, $v$ is computed solving the Euler-Lagrange equation
related to the minimization with respect to $v$:
\begin{equation*}
  \int_\cP \inner{A(u_r(p) + \lambda(p)v;p)-b(p)}{\lambda(p)
    \delta v}_\cU \mu(\di p) = 0, \quad \forall \delta v \in \cU,
\end{equation*}
or equivalently
\begin{equation}\label{eq:critical_point_v}
   \inner{R_\lambda(v)}{\delta v}_\cU  = 0, \quad \forall \delta v \in \cU,
\end{equation}
with $R_\lambda(v) = \int_\cP \left(b(p)-A(u_r(p) +
  \lambda(p)v;p))\right)\lambda(p)\mu(\di p)$. Similarly, for $v$ fixed,
the minimization on $\lambda$ requires the solution of the nonlinear equation:
\begin{equation*}
  \int_\cP \inner{A(u_r(p) + \lambda(p)v;p)-b(p)}{\delta \lambda(p) v}_\cU \mu(\di p) = 0, \quad \forall \delta \lambda \in \cQ,
\end{equation*}
or equivalently,
 \begin{equation}\label{eq:critical_point_lambda}
  \inner{R_v(\lambda)}{\delta \lambda}_\cQ= \int_\cP R_v(\lambda)(p)  \delta \lambda(p) \mu(\di p) = 0, \quad \forall \delta \lambda \in \cQ,
\end{equation}
with $R_v(\lambda) : p\mapsto \inner{b(p)-A(u_r(p) +
  \lambda(p)v;p)}{v}_\cU$. Problems \eqref{eq:critical_point_v} and
\eqref{eq:critical_point_lambda} are well-defined since they
correspond to the Euler-Lagrange equation related to the minimization
of the strongly convex functionals $J_\lambda:v\mapsto J_\cP(u_r+\lambda \otimes v)$ and $J_v:\lambda \mapsto J_\cP(u_r+\lambda\otimes v)$ respectively. Moreover, the
approximations $\seqt{u_r}{r\in\bN}$ are guaranteed to converge to the
solution (see \cite{Cances2011}).

The equations \eqref{eq:critical_point_v}
and \eqref{eq:critical_point_lambda} can be solved by any suitable
method. For example, Newton's method for \eqref{eq:critical_point_v} is
iterating $v_{k+1} \gets v_k - [\nabla R_\lambda(v_k)]^{-1}
R_\lambda(v_k)$ until convergence, and correspondingly for
\eqref{eq:critical_point_lambda}: $\lambda_{k+1} \gets \lambda_k -
[\nabla R_v(\lambda_k)]^{-1} R_v(\lambda_k)$. Newton's method
can be seen as a prototype algorithm for solving
\eqref{eq:critical_point_v} and \eqref{eq:critical_point_lambda}. The
basic PGD algorithm is summarized in Algorithm \ref{alg:basic_pgd}.

\begin{algorithm}
  \caption{Basic PGD}
  \label{alg:basic_pgd}
  \begin{algorithmic}
    \State  Initialization $u_0$.
    \State $r\gets 0$
    \While {\emph{no convergence of $u_r$}}
    \State Initialize $v$, $\lambda$
    \While  {\emph{no convergence of $\lambda \otimes v$}}
    \State $\lambda \gets \lambda/\normt{\lambda}_\cQ$
    \State Solve Equation \eqref{eq:critical_point_v} for $v$
    \State $v \gets v/\normt{v}_\cU$
    \State Solve Equation \eqref{eq:critical_point_lambda} for $\lambda$
    \EndWhile
    \State $u_{r+1} \gets u_r + \lambda \otimes v$
    \State $r\gets r+1$
    \EndWhile
  \end{algorithmic}
\end{algorithm}

\subsection{Non-intrusive implementation}

\subsubsection{Computation of the projected residuals}
\label{sec:red_res}

To drive to zero the residuals in $R_\lambda(v) = -\nabla J_\lambda(v)$
in \eqref{eq:critical_point_v} and  $R_v(\lambda) = -\nabla J_v(\lambda)$
in \eqref{eq:critical_point_lambda} in the basic PGD
Algorithm \ref{alg:basic_pgd} (e.g.\ by Newton's method as indicated
above in the solutions steps in Algorithm \ref{alg:basic_pgd}), 
those residuals have to be evaluated.  The non-intrusive evaluation
will only use the usual residual $R(v;p)$ of \eqref{eq:param_prob}
as introduced in Section \ref{sec:parametric-problem}.

Let $\{w_z\}$ and $\{p_z\}$ be the weights and points associated with
a quadrature formula on $\C{P}$ for the measure $\mu$. The
residual $R_\lambda(v)$ in expression \eqref{eq:critical_point_v}
then becomes
\[ 
R_\lambda(v) = \int_{\C{P}} \lambda(p)\, R(u_r(p )+\lambda(p )v;p) \; 
\mu(\di p) \approx \sum_z w_z \,\lambda(p_z)\, R(u_r(p_z)+\lambda(p_z)v;p_z) . 
\] 
Similarly, the expression in \eqref{eq:critical_point_lambda} becomes for
all $\delta\lambda\in\C{Q}$
\begin{multline*}
\langle R_v(\lambda),\delta\lambda \rangle_{\C{Q}} = \int_{\C{P}}
\langle R(u_r(p)+\lambda(p) v;p),v \rangle_{\C{U}}\,\delta\lambda(p)\; \mu(\di p) 
\\ \approx \sum_z w_z \, \langle R(u_r(p_z)+\lambda(p_z)v;p_z),
v \rangle_{\C{U}}\, \delta\lambda(p_z) .
\end{multline*}
One may observe from these relations that the computation of 
the residuals in \eqref{eq:critical_point_v} and \eqref{eq:critical_point_lambda}
requires only the evaluation of standard residuals at the quadrature
points $p_z$ of the parametric space with state vector 
$u_r(p_z)+\lambda(p_z)v$, that is
\[
R(u_r(p_z)+\lambda(p_z)v;p_z) = b(p_z) - A(u_r(p_z)+\lambda(p_z)v;p_z).
\]

\subsubsection{Introduction of a quasi-Newton method}
\label{sec:intro_qn}
If one were to use Newton's method for solving \eqref{eq:critical_point_v}
resp.\ \eqref{eq:critical_point_lambda} in Algorithm
\ref{alg:basic_pgd}, one would not only have to evaluate residuals,
but one would also have to evaluate the Hessians of the functionals
(gradients of the residuals). The first Hessian is equal to
      \begin{equation}
\nabla^2 J_\lambda(v) = -\nabla R_\lambda(v) =
\int_{\C{P}} \lambda(p)^2 \, \nabla A(u_r(p) + \lambda(p)v;p)
\;\mu(\di p)\label{eq:H_lambda}
\end{equation}
 and could be called a ``weighted tangent matrix'', and
for the other residual we have for all $\delta\lambda_1,
\delta\lambda_2 \in \C{Q}$:
\begin{multline}\label{eq:H_v} \nabla^2
J_v(\lambda)(\delta\lambda_1,\delta\lambda_2) = -\langle\nabla
R_v(\lambda)\delta\lambda_1,\delta\lambda_2\rangle_\C{Q}\\ : =
\int_{\C{P}} \langle \nabla A(u_r(p) + \lambda(p)v;p)\,v, v
\rangle_{\C{U}} \, \delta\lambda_1(p) \delta\lambda_2(p) \;\mu(\di p).
    \end{multline} Again this would mean accessing the ``tangent
matrix'' $\nabla A(\cdot;p)$, and hence Newton's method cannot be really
carried out non-intrusively. We therefore propose to use a
quasi-Newton method, which only requires evaluation of residuals. This
can be done in a non-intrusive fashion as demonstrated in Subsection \ref{sec:red_res}.

In the following, the symbol
$x$ can stand for $\lambda$ (resp. $v$), $y$ for $v$ (resp. $\lambda$)
and $\cX$ for the space $\cQ$ (resp. $\cU$). The inner product on
$\cX$ is denoted by $\innert{\cdot}{\cdot}_{\cX}$. A quasi-Newton
method \cite{Dennis1983} defines the iterations by
$$x^{(\ell+1)} = x^{(\ell)} + \rho_\ell C_\ell R_y(x^{(\ell)}),$$
where $\rho_\ell$ is a scalar factor to be defined through a
linesearch procedure to be described later, and $C_\ell$ is an
approximation of the inverse of the negative gradient of
$R_y(x^{(\ell)})$ computed with the different iterates of the
algorithm such that the so called quasi-Newton equation $C_{\ell+1}
z_\ell = t_\ell$ is satisfied --- see below for $z_\ell$ and $t_\ell$
--- and the correction of $C_\ell$ in each iteration is of low rank.

Given that we are minimizing a functional, we use here a BFGS method
\cite{Matthies1979} where at iteration $\ell+1$, $C_{\ell+1}$ is
defined recursively by
\begin{equation}\label{eq:sherman-morrison}
  C_{\ell+1} = C_\ell + \frac{\inner{z_\ell}{t_\ell}_\cX +
    \inner{z_\ell}{s_\ell}_\cX}{\inner{z_\ell}{t_\ell}_\cX^2} (t_\ell\otimes t_\ell) -
  \frac{1}{\inner{z_\ell}{t_\ell}_\cX} \left( s_\ell \otimes t_\ell + t_\ell\otimes s_\ell \right).
\end{equation}
where $z_\ell = -(R_y(x^{(\ell+1)})-R_y(x^{(\ell)}))$, $t_\ell =
x^{(\ell+1)} - x^{(\ell)}$, $s_\ell = C_\ell z_\ell$ and $C_0$ is
taken as the formal inverse of a convenient preconditioner to be
defined later.

It should be noted that the algorithm can be performed in a
`matrix-free' formulation, as the matrices $C_\ell$ are only needed
through their action on a vector. Hence they have not to be stored
explicitely \cite{Matthies1979}. 
The application of $C_\ell$ to a vector is described recursively by
\eqref{eq:sherman-morrison}, the action of a typical term, e.g.\ $s_\ell
\otimes t_\ell$, on a vector $x$ being given by $\langle t_\ell, x
\rangle_{\C{X}}\, s_\ell$. The choice of $C_0$ will
be described later. In that way only the vectors $t_\ell$ and $s_\ell$
plus the scalar factors have to be stored for each update. The
application of $C_\ell$ to a vector thus needs two inner products and
a linear combination of three vectors per update. Most often, BFGS is
used in a \emph{limited memory} form \cite{Matthies1979}, with the
number of updates limited to $L$. Once the counter reaches $\ell \ge L$,
either all updates are `forgotten' --- a restart --- or the vectors
$t_\ell$ and $s_\ell$ plus scalar factors are put in a queue of length
$L$, and when the queue is full the first update is popped out and the
last one enqueued; for details see \cite{Matthies1979}.

 With the notations $J_v:\lambda \mapsto
J_\cP(u_r+\lambda\otimes v)$ and $J_\lambda:v \mapsto
J_\cP(u_r+\lambda\otimes v)$, this yields the Algorithm \ref{alg:BFGS}
for computing the solution of $R_y(x) = 0$.
\begin{algorithm}
  \caption{BFGS for computing the solution of $R_y(x) = 0$}
  \label{alg:BFGS}
  \begin{algorithmic}
    \State Initialization of $x^{(0)}$
    \State $C_0 \gets P_y^{-1}\quad (\text{symbolically, inverse of preconditioner}\quad P_y)$
    \State $\ell\gets 0$
    \State $d_0 \gets C_\ell R_y(x^{(0)})$
    \While  {\emph{no convergence}} 
    \State $\rho_\ell \gets$ coarse root of $\rho \mapsto
    \sigma(\rho)$
    \State $t_\ell \gets \rho_\ell d_\ell$
    \State $x^{(\ell+1)} \gets x^{(\ell)} + t_\ell $
    \State $d_{\ell+1} \gets C_\ell R_y(x^{(\ell+1)})$
    \State $s_\ell \gets d_{\ell}-d_{\ell+1}$
    \State Store update information $t_\ell$, $s_\ell$ and scalar factors
    \State $\ell\gets \ell+1$
    \EndWhile 
  \end{algorithmic}
\end{algorithm}

One should bear in mind that this algorithm relies on evaluations of
$R_y(x^{(\ell)})$, that is evaluations of standard residuals according
to Section \ref{sec:red_res}, which makes this algorithm non-intrusive. The scalar $\rho_\ell$ is computed with a coarse
linesearch, which picks $\rho_\ell$ such as to minimize $\rho \mapsto
\varsigma(\rho) := J_y(x^{(\ell)} + \rho\, d_\ell)$. At the minimum we
will have $\sigma(\rho) := \di \varsigma(\rho)/ \di \rho = 0$, which
means $\sigma(\rho) = \langle d_\ell, R_y(x^{(\ell)} + \rho\, d_\ell)
\rangle_{\C{X}} = 0$. The linesearch can thus be carried out by
finding a zero or root of the one-dimensional equation $\sigma(\rho) =
0$, which involves only evaluation of residuals and hence can be
performed non-intrusively. In \cite{Matthies1979} a variant of
\emph{regula falsi} was used for this. The linesearch can be very
coarse, it is in effect an `insurance policy' to avoid divergence in
early iterations. It can be used with Newton's method to increase the
domain of convergence. One may show (see
\cite{Dennis1983,Matthies1979} and the references therein) that as the
method converges, one may choose $\rho_\ell = 1$ so that the
linesearch does not have to be carried out later in the iteration; for
details see \cite{Matthies1979}. Given that $R_y(x)$ can be evaluated
in a non-intrusive fashion with numerical integration, the whole
technique is non-intrusive. The BFGS method is summarized in Algorithm
\ref{alg:BFGS}, and the non-intrusive implementation of the basic PGD
method now uses the BFGS algorithm as described in Algorithm
\ref{alg:BFGS} for the two tasks:
\begin{itemize}
\item Solve equation \eqref{eq:critical_point_v} for $v$.
\item Solve equation \eqref{eq:critical_point_lambda} for $\lambda$.
\end{itemize}



It remains to specify the matrix $C_0 = P_y^{-1}$.  The matrix is only
needed when applied to a vector, thus $P_y^{-1}$ is not needed explicitly.
The preconditioner is best if it is a good approximation of the Hessian
$\nabla^2 J_y$.  Relation \eqref{eq:H_lambda} suggest some very simple choices, e.g.\ for $x = v$ and
$y = \lambda$ when we solve for $v$ (solving equation 
\eqref{eq:critical_point_v}), we may use the original
``deterministic'' preconditioner $P = P(v;p)$ described in 
Section \ref{sec:parametric-problem} to obtain an approximation for 
$\nabla^2 J_\lambda(v)$.  As $\lambda$ is normalized, a very crude
approximation is $P_\lambda := P(u_r(p_a);p_a)$, where $p_a \in\C{P}$ 
is a (possibly well chosen) sample.  This way the preconditioner
is accessible in a non-intrusive fashion.
\begin{remark}
  Another possibility is to directly replace $C_0 R_y(x^{(\ell)})
  \leftrightarrow C_0 R_\lambda(v^{(\ell)})$ (the only context where
  $C_0$ is needed) by
  \begin{multline*}
    P_\lambda^{-1} R_\lambda(v^{(\ell)}) = C_0 R_\lambda(v^{(\ell)}) := \\
    \sum_z w_z (\lambda^{(\ell)}(p_z))^2 \, P^{-1}(u_r(p_z) +
    \lambda^{(\ell)}(p_z)\,v^{(\ell)};p_z) \left( R(u_r(p_z) +
      \lambda^{(\ell)}(p_z)\,v^{(\ell)};p_z) \right) ,
  \end{multline*}
  where each evaluation at a sampling point $p_z$ corresponds to one
  `iteration' of the original deterministic system, a non-intrusive
  computation.
\end{remark}
On the other hand for $x = \lambda$ and $y = v$ when we solve for
$\lambda$ (solving equation \eqref{eq:critical_point_lambda}), we see
from Equation \eqref{eq:H_v} that the action of $\nabla^2
J_v(\lambda)$ is fully diagonalized, it is multiplication by the
positive scalar function $p \mapsto \langle \nabla A(u_r(p) +
\lambda(p)v;p)\,v, v \rangle_{\C{U}}$.
  A very simple choice is replacing that function by a constant $P_v \in \D{R}_+$
  which one may take --- as $v$ is normalised --- inside the convex hull
  of the spectra of the symmetric positive definite operators $\nabla A(v;p)$,
  a crude approximation is the constant function 
  \[ p \mapsto P_v := \langle P(u_r(p_a) + \lambda(p_a)v;p_a) v,
  v\rangle_{\C{U}} \approx \langle \nabla
  A(u_r(p_a)+\lambda(p_a)v;p_a)\,v, v \rangle_{\C{U}} > 0,\] where $p_a
  \in\C{P}$ is again a random (or well chosen) element. The
  application to a function $\lambda \in \C{Q}$ is then the
  multiplication by the \emph{constant} $P_v^{-1} \in \D{R}_+$ defined
  by
\[ P_v^{-1} = \left(\langle  P(u_r(p_a);p_a) v, v \rangle_{\C{U}}\right)^{-1}.\]
 That is certainly
  a non-intrusive computation.  
  \begin{remark}
    Another possibility is to directly replace $C_0 R_y(x^{(\ell)}) =
    C_0 R_v(\lambda^{(\ell)})(p)$ (the only context where $C_0$ is
    needed) for each $p_z$ of the integration rule (the only points in
    $\C{P}$ where it is needed) by
    \begin{multline*}
      P_v^{-1} R_v(\lambda^{(\ell)})(p_z) = C_0 R_v(\lambda^{(\ell)})(p_z) := \\
      \langle P^{-1}(u_r(p_z) + \lambda^{(\ell)}(p_z)\,v^{(\ell)};p_z)
      \left( R(u_r(p_z) + \lambda^{(\ell)}(p_z)\,v^{(\ell)};p_z)
      \right), v^{(\ell)}\rangle_{\C{U}} ,
    \end{multline*}
    where each evaluation at a sampling point $p_z$ corresponds to one
    `iteration' of the original deterministic system at parameter
    value $p_z$ with starting point $u_r(p_z) +
    \lambda^{(\ell)}(p_z)\,v^{(\ell)}$,that is a non-intrusive computation.
  \end{remark}

\section{Improved PGD algorithm}
\label{sec:impr-algor-nonl}

 In the following, we consider that
the cost of the evaluations of $\seqt{u_r(p_z)}{}$ is negligible
compared to the cost of the evaluations of $\seqt{A(u_r(p_z);p_z)-b(p_z)}{}$.
This hypothesis suggests
that the cost of the optimization of all the
$(\lambda_i)_{i=1}^r$, or of all the $(v_i)_{i=1}^r$, should
be almost independent of the rank $r$. We thus propose an improved strategy for computing an approximation of the solution.

\subsection{Low-rank approximation of the solution}
\label{sec:low-rank-appr}

The set of canonical tensors $\cC_r$ of rank at most $r$, defined by
\begin{align*}
  \cC_r = \left\{ \sum_{i=1}^r \lambda_i \otimes v_i;\ \lambda_i
    \in \cQ,\ v_i \in \cU \right\} \subset \cQ \otimes \cU,
\end{align*}
is weakly closed, and the best approximation of a tensor in
$\cC_r$ with respect to the canonical norm is given by the truncated
singular value decomposition (SVD).

A direct low-rank approximation $u_r \in \cC_r$ of the solution $u$
is defined by
\begin{equation}\label{eq:min_cr}
  \min_{v\in\cC_r} J_\cP(v), \quad \text{with} \quad J_\cP(v)=\int_\cP
  J(v(p);p) \mu(\di p),
\end{equation}
where $J$ is defined in Theorem \ref{th:param_framework}.

The set $\cC_r$ is not a vector space, nor a convex set, so that the
computation of the solution to \eqref{eq:min_cr} requires specific algorithms. We introduce a
parameterization $F_r:\cQ^r \times \cU^r \rightarrow \cQ \otimes \cU$
such that $F_r(\cQ^r,\cU^r) = \cC_r$. Let $\vek{\lambda} =
(\lambda_{i})_{1\le i \le r}\in \cQ^r$ and $\vek{v} = (v_{i})_{1\le
  i \le r}\in \cU^r$. The map $F_r$ is defined by
\begin{align}
  \label{eq:param_cr}
  F_r(\vek{\lambda},\vek{v})  = \sum_{i=1}^r \lambda_i \otimes v_i.
\end{align}
Thanks to this parameterization, the problem \eqref{eq:min_cr}
consists in solving
\begin{equation}\label{eq:min_param_tens}
  \min_{\vek{\lambda} \in \cQ^r,\vek{v}\in \cU^r} J_\cP \circ
  F_r(\vek{\lambda},\vek{v}),
\end{equation}

\begin{lemma}\label{lem:bilin_conti}
  The map $F_r$ is bilinear and continuous, such that $F_r$ and its partial maps are Fréchet differentiable.
\end{lemma}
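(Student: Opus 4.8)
The plan is to establish the three asserted properties in turn, all of which reduce to elementary facts about bounded bilinear maps on Hilbert spaces, together with the cross-norm property of the induced canonical norm on $\cQ\otimes\cU$. First I would read off bilinearity directly from the definition \eqref{eq:param_cr}. Fixing $\vek{v}\in\cU^r$, the partial map $\vek{\lambda}\mapsto\sum_{i=1}^r\lambda_i\otimes v_i$ is linear because each elementary tensor $\lambda_i\otimes v_i$ depends linearly on $\lambda_i$ for fixed $v_i$, and a finite sum of linear maps is linear; the symmetric argument with $\vek{\lambda}$ fixed handles the second argument. Hence $F_r$ is bilinear, and in particular both of its partial maps are linear.

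Next I would prove continuity through boundedness, which for a bilinear map is equivalent to continuity. Applying the triangle inequality, then the cross-norm identity $\normt{\lambda\otimes v}_{\cQ\otimes\cU}=\normt{\lambda}_\cQ\,\normt{v}_\cU$ valid for the induced canonical norm, and finally the Cauchy--Schwarz inequality on the finite sum, I would obtain
\begin{equation*}
  \normt{F_r(\vek{\lambda},\vek{v})}_{\cQ\otimes\cU}
  \le \sum_{i=1}^r \normt{\lambda_i}_\cQ\,\normt{v_i}_\cU
  \le \Big(\sum_{i=1}^r \normt{\lambda_i}_\cQ^2\Big)^{1/2}
      \Big(\sum_{i=1}^r \normt{v_i}_\cU^2\Big)^{1/2}
  = \normt{\vek{\lambda}}_{\cQ^r}\,\normt{\vek{v}}_{\cU^r},
\end{equation*}
so $F_r$ is bounded with constant $1$, and therefore continuous.

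Finally, Fréchet differentiability follows from the standard fact that every bounded bilinear map is Fréchet differentiable. Expanding by bilinearity yields
\begin{equation*}
  F_r(\vek{\lambda}+\vek{h},\vek{v}+\vek{k}) - F_r(\vek{\lambda},\vek{v})
  = F_r(\vek{h},\vek{v}) + F_r(\vek{\lambda},\vek{k}) + F_r(\vek{h},\vek{k}),
\end{equation*}
and I would identify the derivative as the bounded linear map $DF_r(\vek{\lambda},\vek{v})(\vek{h},\vek{k}) = F_r(\vek{h},\vek{v}) + F_r(\vek{\lambda},\vek{k})$. The remainder $F_r(\vek{h},\vek{k})$ is controlled by the boundedness estimate above, $\normt{F_r(\vek{h},\vek{k})}\le\normt{\vek{h}}_{\cQ^r}\normt{\vek{k}}_{\cU^r}\le\tfrac{1}{2}\normt{(\vek{h},\vek{k})}^2$, which is $o(\normt{(\vek{h},\vek{k})})$ as the increment tends to zero; this establishes differentiability of $F_r$. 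For the partial maps, each is linear and bounded, hence it coincides with its own Fréchet derivative.

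There is no serious obstacle here, as the statement is a packaging of well-known properties of bounded bilinear maps. The only point requiring care is the invocation of the cross-norm property $\normt{\lambda\otimes v}=\normt{\lambda}\,\normt{v}$, which is precisely the defining feature of the canonical (induced) norm on the Hilbert tensor product recalled in Section \ref{sec:parametric-problem}; without it the boundedness estimate, and with it both continuity and the control of the quadratic remainder, would not close.
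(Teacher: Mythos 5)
Your proof is correct and follows essentially the same route as the paper's: the paper likewise deduces continuity of $F_r$ from the continuity of the tensor product $\otimes:\cQ\times\cU\to\cQ\otimes\cU$ (i.e.\ the cross-norm property) and then concludes Fréchet differentiability of $F_r$ and its partial maps from bilinearity plus continuity. The only difference is that you spell out the details the paper leaves implicit (the explicit boundedness estimate via Cauchy--Schwarz and the quadratic-remainder argument), which is a faithful expansion rather than a different approach.
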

\begin{proof}
  The continuity of $F_r$ comes from the continuity of the tensor
  product $\otimes : \cQ \times \cU \mapsto \cQ \otimes \cU$ with
  respect to the norm $\normt{\cdot}_{\cQ\otimes\cU}$. As a
  consequence, $F_r$ and its partial maps are Fréchet differentiable.
\end{proof}

\begin{remark}
  The representation \eqref{eq:param_cr} is not unique. For $T\in
  GL_r(\bR)$, we denote $T\vek{\lambda} = \seqt{\sum_{j=1}^r T_{ij}
    \lambda_j}{i=1}^r \in \cQ^r$ and $T\vek{v} = \seqt{\sum_{j=1}^r
    T_{ij}v_j}{i=1}^r \in \cU^r$. For all $(\vek{\lambda},\vek{v},T)
  \in \cQ^r \times \cU^r \times GL_r(\bR)$, we have
  $F_r(\vek{\lambda},\vek{v}) = F_r(T\vek{\lambda},T^{-1}\vek{v})$.
  The principal consequence is that there exists an infinite number of
  solutions to the problem \eqref{eq:min_cr}, and that these solutions
  are not isolated. Hence, we can not directly apply a Newton method
  since the Hessian will become ill-conditioned near a critical point.
\end{remark}

\subsection{Adaptive alternating minimization algorithm}
\label{sec:altern-minim-algor}
We solve the problem \eqref{eq:min_param_tens} with an alternating
minimization algorithm, which means that we alternatively solve the
problems 
\begin{equation*}
  \min_{\vek{v}\in
    \cU^r}\int_\cP J(F_r(\vek{\lambda},\vek{v})(p);p)\mu(\di p)
  \quad\text{and} \quad  \min_{\vek{\lambda} \in \cQ^r}\int_\cP
  J(F_r(\vek{\lambda},\vek{v})(p);p)\mu(\di p) 
\end{equation*}
until convergence of $F_r(\vek{\lambda},\vek{v})$. The existence and
the characterization of the solutions of these problems are given in
the following theorem.
\begin{theorem}\label{th:min_altern}
  Under the assumptions of Theorem \ref{th:param_framework}, {if
  $\vek{\lambda}$ is a set of linearly independent functions}, there
  exists a unique solution $\vek{v}\in\cU^r$ to the minimization
  problem
  \begin{equation*}
    \min_{\vek{v} \in \cU^r}\int_\cP
    J(F_r(\vek{\lambda},\vek{v})(p);p)\mu(\di p),
  \end{equation*}
  characterized by the equation
  \begin{equation}\label{eq:stat_v}
    \int_{\cP} \inner{R(F_r(\vek{\lambda},\vek{v})(p);p)}{F_r(
      \vek{\lambda},\delta\vek{v})(p)}_\cU \mu(\di p) = 0, \quad \forall \delta
    \vek{v} \in \cU^r.
  \end{equation}
  Similarly, {if $\vek{v}$ is a set of linearly independent vectors}, there
  exists a unique solution $\vek{\lambda}\in\cQ^r$ to the minimization problem
  \begin{equation*}
    \min_{\vek{\lambda} \in \cQ^r}\int_\cP
    J(F_r(\vek{\lambda},\vek{v})(p);p)\mu(\di p),
  \end{equation*}
  characterized by the equation
  \begin{equation}\label{eq:stat_lambda}
    \int_{\cP} \inner{R(F_r(\vek{\lambda},\vek{v})(p);p)}{F_r(\delta
      \vek{\lambda},\vek{v})(p)}_\cU \mu(\di p) = 0, \quad \forall \delta
    \vek{\lambda} \in \cQ^r.
  \end{equation}
\end{theorem}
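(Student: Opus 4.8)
The plan is to treat the two subproblems symmetrically and, for the first one, to fix $\vek{\lambda}$ and study the reduced functional $\Phi(\vek{v}) := \int_\cP J(F_r(\vek{\lambda},\vek{v})(p);p)\,\mu(\di p)$ on the Hilbert space $\cU^r$. By Theorem \ref{th:param_framework} the functional $J_\cP$ is well defined and finite on $L^2(\cP;\cU)\cong\cQ\otimes\cU$, and $F_r(\vek{\lambda},\vek{v})\in\cQ\otimes\cU$ (each $\lambda_i\otimes v_i$ being a rank-one tensor of finite norm), so $\Phi$ is well defined. The goal is to show that $\Phi$ is strongly convex and continuous; existence and uniqueness of the minimizer then follow from the standard result that a strongly convex, (weakly) lower semicontinuous functional on a Hilbert space attains its infimum at a unique point. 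The stationarity equation \eqref{eq:stat_v} is then obtained as the vanishing-gradient condition for this convex, differentiable $\Phi$.

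First I would establish strong convexity, which is the crux of the argument and the only place where linear independence of $\vek{\lambda}$ is used. Since $F_r(\vek{\lambda},\cdot)$ is linear (Lemma \ref{lem:bilin_conti}), applying the uniform strong convexity assumption~(a) pointwise in $p$ to $F_r(\vek{\lambda},\vek{v})(p)$ and $F_r(\vek{\lambda},\vek{w})(p)$ and integrating gives
\[
\Phi(t\vek{v}+(1-t)\vek{w}) \le t\,\Phi(\vek{v}) + (1-t)\,\Phi(\vek{w}) - \frac{\alpha}{2}\,t(1-t)\,\normt{F_r(\vek{\lambda},\vek{v}-\vek{w})}_{\cQ\otimes\cU}^2 .
\]
It then remains to bound the tensor norm from below. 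Writing $\vek{d}=\vek{v}-\vek{w}$ and expanding in the canonical inner product,
\[
\normt{F_r(\vek{\lambda},\vek{d})}_{\cQ\otimes\cU}^2 = \sum_{i,j} \innert{\lambda_i}{\lambda_j}_\cQ \, \innert{d_i}{d_j}_\cU ,
\]
and a diagonalisation of the Gram matrix $M_{ij}=\innert{\lambda_i}{\lambda_j}_\cQ$ shows this quantity is at least $\gamma \sum_i \normt{d_i}_\cU^2 = \gamma\, \normt{\vek{d}}_{\cU^r}^2$, where $\gamma>0$ is the smallest eigenvalue of $M$. Here linear independence of $\vek{\lambda}$ is exactly what makes $M$ positive definite, hence $\gamma>0$; without it the map $F_r(\vek{\lambda},\cdot)$ has a nontrivial kernel (cf.\ the scaling ambiguity of the Remark) and uniqueness fails. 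This yields strong convexity of $\Phi$ with modulus $\alpha\gamma$.

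Given strong convexity, coercivity and strict convexity are immediate, and continuity of $\Phi$ (from continuity of $F_r$ and the properties of $J_\cP$ recorded in Theorem \ref{th:param_framework}) gives weak lower semicontinuity; a minimizer therefore exists and is unique. For the characterization, $\Phi$ is Fréchet differentiable by the chain rule, combining the Fréchet differentiability of $J$ with gradient $\nabla J(\cdot;p)=A(\cdot;p)-b(p)$ (assumption~(b)) and the differentiability of $F_r$ (Lemma \ref{lem:bilin_conti}). Since $\Phi$ is convex and differentiable, its unique minimizer is characterised by $D\Phi(\vek{v})[\delta\vek{v}]=0$ for all $\delta\vek{v}\in\cU^r$; computing this derivative, using linearity of $F_r(\vek{\lambda},\cdot)$ in its second argument together with $R=-\nabla J$, gives precisely \eqref{eq:stat_v}.

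Finally, the claim for $\vek{\lambda}$ with $\vek{v}$ fixed follows from the same argument with the roles of $\cQ$ and $\cU$ interchanged: now $F_r(\cdot,\vek{v})$ is the relevant linear map, and its lower bound is controlled by the Gram matrix $\innert{v_i}{v_j}_\cU$, which is positive definite exactly when $\vek{v}$ is a set of linearly independent vectors, yielding \eqref{eq:stat_lambda}. The main obstacle is thus the coercivity/lower bound on the linear parameterization through its Gram matrix; once that is in place, the remainder is a routine application of convex optimisation in Hilbert space.
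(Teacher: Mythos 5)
Your proof is correct and follows essentially the same route as the paper: strong convexity of the reduced functional is obtained by composing the uniform strong convexity of $J$ with the linear map $F_r(\vek{\lambda},\cdot)$ (resp.\ $F_r(\cdot,\vek{v})$) and lower-bounding $\normt{F_r(\vek{\lambda},\vek{v}-\vek{w})}_{\cQ\otimes\cU}^2$ through the Gram matrix, whose positive definiteness is exactly where linear independence enters; existence, uniqueness, and the vanishing-gradient characterization then follow from convexity and Fréchet differentiability via the chain rule, just as in the paper's appendix. Your explicit smallest-eigenvalue bound $\gamma$ is merely a slightly more quantitative rendering of the paper's remark that the Gram matrix induces an equivalent norm on $\cU^r$.
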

\begin{proof}
  See Appendix \ref{sec:proof-theorem-min-alt}.
\end{proof}

We equip the product space $\cU^r$ with the natural inner product
$\innert{\cdot}{\cdot}_{\cU^r}$ defined by
\begin{equation*}
  \inner{\vek{w}}{\vek{v}}_{\cU^r} = \sum_{i=1}^r
  \inner{w_i}{v_i}_{\cU}, \quad \forall \vek{w}
  =(w_i)_{i=1}^r\in\cU^r,\ \forall\vek{v} =(v_i)_{i=1}^r \in \cU^r,
\end{equation*}
and we equip the product space $\cQ^r$ with the natural inner product
$\innert{\cdot}{\cdot}_{\cQ^r}$ defined by
\begin{equation*}
  \inner{\vek{\lambda}}{\vek{\gamma}}_{\cQ^r} = \sum_{i=1}^{r}
  \inner{\lambda_i}{\gamma_i}_\cQ, \quad \forall \vek{\lambda} =
  (\lambda_i)_{i=1}^r \in \cQ^r,\ \forall \vek{\gamma} = (\gamma_i)_{i=1}^r \in\cQ^r.
\end{equation*}

We have to solve the nonlinear Equations \eqref{eq:stat_v} and
\eqref{eq:stat_lambda} that are related to the minimization of some
functionals. Given that
\begin{multline*}
  \int_{\cP}
  \inner{R(F_r(\vek{\lambda},\vek{v})(p);p)}{F_r(\vek{\lambda},\delta
    \vek{v})(p)}_\cU \mu(\di p) \\
    = \sum_{i=1}^r \inner{\int_\cP
      R(F_r(\vek{\lambda},\vek{v})(p);p)
    \lambda_i(p) \mu(\di p)}{\delta v_i}_\cU =
  \inner{R_{\vek{\lambda}}(\vek{v})}{\delta \vek{v}}_{\cU^r},
\end{multline*}
finding the solution to Equation \eqref{eq:stat_v} is equivalent to
finding $\vek{v}$, solution to 
\begin{equation}\label{eq:r_v}
  \inner{R_{\vek{\lambda}}(\vek{v})}{\delta \vek{v}}_{\cU^r} = 0,
  \quad \forall \delta \vek{v} \in \cU^r,
\end{equation}
with $R_{\vek{\lambda}}(\vek{v}) =
(R_{{\lambda}_i}(\vek{v}))_{i=1}^r \in \cU^r$ and
\begin{equation*}
  R_{{\lambda}_i}(\vek{v})=\int_\cP R(F_r(\vek{\lambda},\vek{v})(p);p)
    \lambda_i(p) \mu(\di p), \quad  i \in \{1,\hdots,r\}.
\end{equation*}
It is only necessary to compute $R_{\lambda_i}(\vek{v})(p)$ at
the integration points $p_z$. Once the residuum
$R(F_r(\vek{\lambda},\vek{v})(p_z);p_z)$ has been evaluated it can be
used for all $i \in \{1,\dots,r\}$.

Similarly, using that
\begin{multline*}
  \int_{\cP}
  \inner{R(F_r(\vek{\lambda},\vek{v})(p);p)}{F_r(\delta \vek{\lambda},
    \vek{v})(p)}_\cU \mu(\di p) \\ 
  = \sum_{i=1}^r \int_\cP
  \inner{R(F_r(\vek{\lambda},\vek{v})(p);p)}{v_i}_\cU \delta
  \lambda_i(p) \mu(\di p) = \inner{R_{\vek{v}}(\vek{\lambda})}{\delta
    \vek{\lambda}}_{\cQ^r},
\end{multline*}
finding the solution to Equation \eqref{eq:stat_lambda} is equivalent to
finding $\vek{\lambda}$, solution to 
\begin{equation}\label{eq:r_lambda}
  \inner{R_{\vek{v}}(\vek{\lambda})}{\delta \vek{\lambda}}_{\cQ^r} =
  0, \quad \forall \delta \vek{\lambda} \in \cQ^r,
\end{equation}
with $R_{\vek{v}}(\vek{\lambda}) = (R_{v_i}(\vek{\lambda}))_{i=1}^r \in
\cQ^r$ and
\begin{equation*}
  R_{v_i}(\vek{\lambda}): p\mapsto
  \inner{R(F_r(\vek{\lambda},\vek{v})(p);p)}{v_i}_\cU, \quad i \in
  \{1,\hdots,r\},
\end{equation*}
approximated again by standard residuum evaluations;
and $R(F_r(\vek{\lambda},\vek{v})(p_z);p_z)$ has to be evaluated only
once for all $i \in \{1,\dots,r\}$. 

We can thus use once again a BFGS method to solve Problems
\eqref{eq:r_v} and \eqref{eq:r_lambda}. Moreover, the algorithm can be
made non-intrusive using numerical integration given that
\begin{align*}
  \inner{R_{v_i}(\vek{\lambda})}{\delta \lambda_i}_{\cQ} \approx
  \sum_z w_z \inner{R(F_r(\vek{\lambda},\vek{v})(p_z);p_z)}{v_i \delta
  \lambda_i(p_z)}_\cU
\end{align*}
and
\begin{equation*}
  R_{\lambda_i}(\vek{v})\approx \sum_z w_z R(F_r(\vek{\lambda},\vek{v})(p_z);p_z)
  \lambda_i(p_z).
\end{equation*}
With $u_r = F_r(\vek{\lambda},\vek{v})$, we insist on the fact that a
BFGS technique will require the evaluations of the residual
$R(u_r(p_z);p_z) = b(p_z)-A(u_r(p_z);p_z)$.

In order to avoid any degeneracy and obtain well-conditioned problems, we introduce two orthogonalization
steps. We denote by $orth:\cX^r \rightarrow \cX^r$, $\cX = \cQ$ or
$\cU$, an operator such that with $\vek{x}' = orth(\vek{x})$, we have
$\spa \vek{x} \subset \spa \vek{x}'$, and $\vek{x}' = (x_i')_{i=1}^r$ is
an orthonormal set. Such a set can be obtained by taking the first $r$ left
singular vectors of $\vek{x}$ considered as a tensor in $\cX \otimes
\bR^r$ for instance.

Finally, the rank is adapted by choosing a good initial guess at each
step. Except for the rank one approximation, the initial guess for the
computation of the rank $r$ approximation is chosen to be the rank
$r-1$ approximation of the solution computed at the previous iteration
plus a rank one term. The whole approach is summarized in Algorithm
\ref{algo:ni_aama}.

\begin{algorithm}
  \caption{Non-intrusive implementation of the improved PGD}
  \label{algo:ni_aama}
  \begin{algorithmic}
    \State  Initialization of $u_0$.
    \State $r\gets 1$
    \While {\emph{no convergence of $u_r$}}
    \State Initialize $v_r$, $\lambda_r$
    \State $\vek{\lambda} \gets (\lambda_i)_{i=1}^r$
    \State $\vek{v} \gets (v_i)_{i=1}^r$
    \While  {\emph{no convergence of $F_r(\vek{\lambda},\vek{v})$}}
    \State $\vek{\lambda} \gets orth(\vek{\lambda})$
    \State Solve Equation \eqref{eq:r_v} using Algorithm \ref{alg:BFGS}
    \State $\vek{v} \gets orth(\vek{v})$
    \State Solve Equation \eqref{eq:r_lambda} using Algorithm \ref{alg:BFGS}
    \EndWhile
    \State $u_{r} \gets F_r(\vek{\lambda},\vek{v})$
    \State $r\gets r+1$
    \EndWhile
  \end{algorithmic}
\end{algorithm}

The operator $C_0$ has now to be defined. Again, we propose
a priori good approximations $C_0^{-1}$ of the Hessian of the functional in order
to improve the performance of the BFGS method. We observe that the Hessian
$H_{\vek{\lambda}}(\vek{v})$ of $J_\cP \circ
F_r(\vek{\lambda},\vek{v})$ where the derivative is taken with respect to $\vek{v}$ is
\begin{equation*}
  \inner{H_{\vek{\lambda}}(\vek{v}) \delta \vek{v}}{\delta
    \vek{v}'}_{\cU^r} = \sum_{i=1}^r \sum_{j=1}^r \inner{\left(\int_\cP\lambda_i(p)\lambda_j(p)
    \nabla A(u_r(p);p)\mu(\di p)\right) \delta v_{i}}{\delta v_{j}'}_\cU.
\end{equation*}
Given that $\vek{\lambda}$ is a family of orthonormal vectors, this
suggests that $C_0^{-1}$ could be approximated by a block-diagonal version of
the preconditioner proposed in Section \ref{sec:intro_qn}, each block
being defined by $P(u_r(p_a);p_a)$.

\begin{remark}
  Similarly, the Hessian $H_{\vek{v}}(\vek{\lambda})$ of $J_\cP \circ
  F_r(\vek{\lambda},\vek{v})$ where the derivative is taken with
  respect to $\vek{\lambda}$ is
  \begin{equation*}
    \inner{H_{\vek{v}}(\vek{\lambda})\delta \vek{\lambda}}{\delta\vek{\lambda}'}_{\cQ^r} = \sum_{i=1}^r \sum_{j=1}^r \int_\cP
    \inner{\nabla A(u_r(p);p)v_i}{v_j}_\cU \delta{\lambda}_{i}(p)\delta {\lambda}_{j}' \mu(\di p).
  \end{equation*}
  Again, a simple approximation of the Hessian is a block diagonal
  version of the preconditioner proposed in Section
  \ref{sec:intro_qn}, the $i$\textsuperscript{th} block being defined
  by $\alpha_i \mathrm{Id}_{\cU}$ with $\alpha_i =
  \inner{P(u_r(p_a);p_a)v_i}{v_i}_\cU$. Note that all proposed
  approximations of the Hessian require the computation of only one
  preconditioner at the parameter $p_a$.
\end{remark}

\subsection{Finite dimensional case - Algebraic form}
\label{sec:algebraic-context}

We assume that $\cU$ is a finite dimensional vector space. Let
$\{e_i\}_{i=1}^n$ be a basis of $\cU$. We denote by $\cQ_m =
\spat{\{\psi_j\}_{j=1}^m} \subset \cQ$, where $\{\psi_j\}_{j=1}^m$
is a basis of $\cQ_m$. We introduce thus a finite dimensional space
$\cQ_m \otimes \cU \subset \cQ \otimes \cU$ for approximating the
solution.

A tensor $u \in \cQ_m \otimes \cU$ can thus be written
\begin{align*}
  u = \sum_{i=1}^m \sum_{j=1}^n u_{ij} \psi_i
  \otimes e_j,
\end{align*}
and a low-rank tensor is given as
\begin{align*}
  u_r = F_r(\vek{\lambda},\vek{v}) = \sum_{k=1}^r \lambda_k \otimes v_k =
  \sum_{i=1}^m \sum_{j=1}^n \left(\sum_{k=1}^r
    \lambda_{ik} v_{jk} \right)\psi_i \otimes e_j.
\end{align*}
We denote by $\mat{\Lambda} \in \bR^{m\times r}$, $\mat{V} \in
\bR^{n \times r}$ and $\mat{F}_r(\mat{\Lambda},\mat{V}) \in \bR^{m
\times n}$ matrices such that
\begin{align*}
  \mat{\Lambda}_{ik} = \lambda_{ik}, \quad  \mat{V}_{j k}
  = v_{j k} \quad \text{and} \quad \mat{F}_r(\mat{\Lambda},\mat{V}) = \mat{\Lambda} \mat{V}^T.
\end{align*}
With these notations, a low-rank tensor can be expressed under the
different forms
\begin{align*}
  u_r = F_r(\vek{\lambda},\vek{v}) = \sum_{k=1}^r \lambda_k \otimes v_k =
  \sum_{i=1}^m \sum_{j=1}^n
  \mat{F}_r(\mat{\Lambda},\mat{V})_{i j} \psi_i \otimes e_j,
\end{align*}
thus allowing the identification of $F_r$ with $\mat{F}_r:\bR^{m\times r}
\times \bR^{n \times r} \rightarrow \bR^{m \times n}$. Denoting by
$\mat{J}_\cP:\bR^{m \times n}\rightarrow \bR$ the functional such that
$J_\cP \circ F_r(\vek{\lambda},\vek{v}) = \mat{J}_\cP \circ
\mat{F}_r(\mat{\Lambda},\mat{V})$, we can consider the minimization
problem, equivalent to Problem \eqref{eq:min_param_tens}, defined
by
\begin{align*}
  \min_{\mat{\Lambda} \in \bR^{m\times r},\mat{V} \in \bR^{n \times
      r}} \mat{J}_\cP \circ \mat{F}_r(\mat{\Lambda},\mat{V}),
\end{align*}
and directly use all the algorithms described in Section
\ref{sec:basic_pgd} and \ref{sec:impr-algor-nonl} in the general setting.

We denote by $\vek{\psi}_z$ the vector of evaluations of the basis
functions of $\cQ_m \subset \cQ = L^2(\cP)$ at the parameter value
$p_z \in \cP$, defined by $\vek{\psi}_z =
(\psi_i(p_z))_{i=1}^m \in \bR^{m}$. 

We set $u_r(p_z) = F_r(\vek{\lambda},\vek{v})(p_z)$ and denote by
$\vek{R}(u_r(p_z);p_z) \in \bR^n$ the vector defined by
$$(\vek{R}(u_r(p_z);p_z))_j =
\inner{R(u_r(p_z);p_z)}{e_j}_{\cU} = \inner{b(p_z)-A(u_r(p_z);p_z)}{e_j}_{\cU}, \quad \forall j\in\{1,\hdots,n\}.$$
We deduce the algebraic form of \eqref{eq:r_v} and
\eqref{eq:r_lambda}, that is
\begin{align*}
  \inner{R_{\vek{\lambda}}(\vek{v})}{\delta \vek{v}}_{\cU^r} \approx& \sum_z w_z
  \vek{\psi}_z^T \mat{\Lambda} \delta \mat{V}^T
  \mat{R}(u_r(p_z);p_z), \\=& \inner{\left(\sum_z w_z \mat{R}(u_r(p_z);p_z)
      \vek{\psi}_z^T\right) \mat{\Lambda}}{\delta \mat{V}}_{\bR^{n\times r}},
\end{align*}
and
\begin{align*}
  \inner{R_{\vek{v}}(\vek{\lambda})}{\delta \vek{\lambda}}_{\cQ^r}
  \approx& \sum_z w_z \vek{\psi}_z^T \delta \mat{\Lambda} \mat{V}^T
  \mat{R}(u_r(p_z);p_z), \\
  &= \inner{\left( \sum_z w_z \vek{\psi}_z \mat{R}(u_r(p_z);p_z)^T \right)
    \mat{V}}{\delta \mat{\Lambda}}_{\bR^{m\times r}}
\end{align*}
where $\innert{\mat{X}}{\mat{Y}}_{\bR^{s\times t}} =
\trace(\mat{X}\mat{Y}^T)$ is the canonical inner product in the matrix
space $\bR^{s\times t}$. This clearly shows that
$R_{\vek{\lambda}}(\vek{v})$ and $R_{\vek{v}}(\vek{\lambda})$ can be
evaluated in a non-intrusive fashion thanks to simple evaluations of
the original residual $R(u_r(p_z);p_z)=b(p_z)-A(u_r(p_z);p_z)$ defined
in Section \ref{sec:parametric-problem} and hence is a non-intrusive computation.

\section{Numerical examples}
\label{sec:numerical-examples}

\subsection{Electronic network}
\label{sec:electronic-network}

\renewcommand{\mathbf}[1]{#1}

In this section, we use the example introduced in \cite{Giraldi2013}.
It is a simple electronic network. The original equation to be
solved is
\begin{equation*}
  \mathbf{B} \mathbf{u}(\mathbf{p}) + (p_1 + 2) (\mathbf{u}(\mathbf{p})^T \mathbf{u}(\mathbf{p}))
  \mathbf{u}(\mathbf{p}) = (p_2 + 25) \mathbf{f},
\end{equation*}
with
\begin{equation*}
  \mathbf{B} = \frac{1}{R}
  \begin{pmatrix*}[r]
    3 & -1 & -1 & 0 & -1 \\
    -1 & 3 & -1 & -1 & 0 \\
    -1 & -1 & 4 & -1 & -1\\
    0 & -1 & -1 & 3 & -1 \\
    -1 & 0 & -1 & -1 & 4
  \end{pmatrix*}, \quad
  \mathbf{f} =
  \begin{pmatrix}
    1 \\ 0 \\ 0 \\ 0 \\ 0
  \end{pmatrix} \quad \text{and} \quad R = 100,
\end{equation*}
where the matrix $\mathbf{B}$ represents the network from Figure
\ref{fig:elec_net}, and $p=(p_1,p_2)$ where $p_1$ and $p_2$ are
uniform random variables on $[-1,1]$. The matrix has this simple form
as we have chosen all resistors equal.
\begin{figure}[h]
  \centering
  \includegraphics[height=5cm]{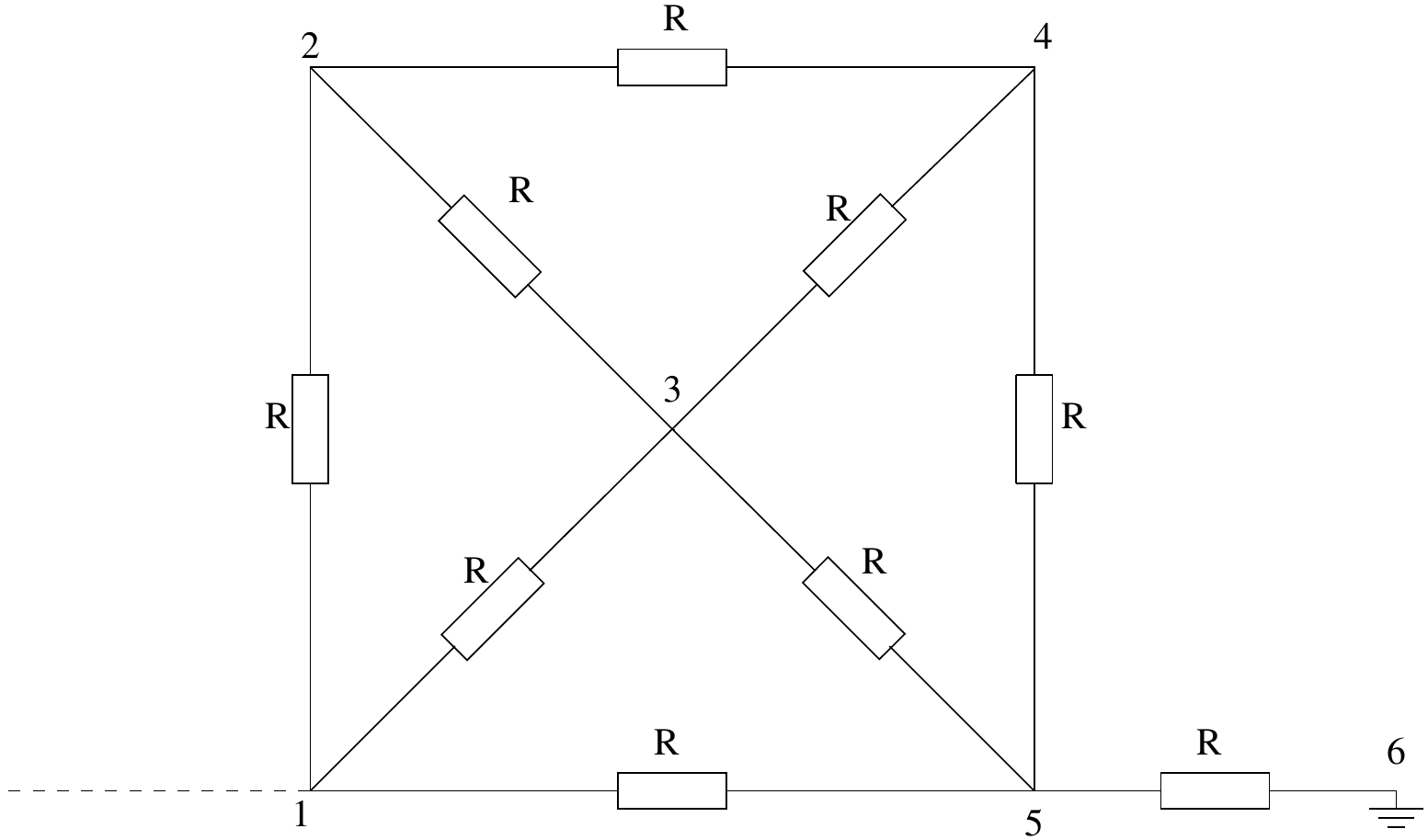}
  \caption{Electronic network.}
  \label{fig:elec_net}
\end{figure}
The problem is related to the minimization of the functional defined
by
$$J(v;p) =
\frac{1}{2} \mathbf{v}^T\mathbf{B} \mathbf{v}+ \frac{1}{4}(p_1 + 2) (\mathbf{v}^T
\mathbf{v})^2 - (p_2 + 25) \mathbf{v}^T\mathbf{f}.$$
The residual $\mathbf{R}(u(p);p) = -\nabla J(u(p);p)$ is thus given by
\begin{equation*}
  \mathbf{R}(\mathbf{u}(\mathbf{p});p) = (p_2 + 25) \mathbf{f} - \left(\mathbf{B}
  \mathbf{u}(\mathbf{p}) + (p_1 + 2) (\mathbf{u}(\mathbf{p})^T \mathbf{u}(\mathbf{p}))
  \mathbf{u}(\mathbf{p})\right).
\end{equation*}
\begin{proposition}\label{prop:elec_network}
  The assumptions of Theorem \ref{th:param_framework} are satisfied.
\end{proposition}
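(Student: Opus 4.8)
The plan is to verify the four hypotheses (a)--(d) of Theorem~\ref{th:param_framework} one at a time for this concrete finite-dimensional instance, in which $\cU = \bR^5$ carries the Euclidean inner product, $\mu$ is the (finite) uniform probability measure on $\cP = [-1,1]^2$, and the data are read off from the functional as $A(v;p) = B v + (p_1+2)(v^T v)\,v$ and $b(p) = (p_2+25)f$. Hypothesis~(b) is the natural starting point and is essentially bookkeeping: differentiating $J(\cdot;p)$ term by term, using that $B$ is symmetric for the quadratic part and $\nabla\!\left((v^T v)^2\right) = 4(v^T v)v$ for the quartic part, yields $\nabla J(v;p) = B v + (p_1+2)(v^T v)v - (p_2+25)f = A(v;p) - b(p)$. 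Since $J(\cdot;p)$ is a polynomial of degree four in $v$, it is $C^\infty$ and Fréchet differentiability is automatic in finite dimensions.

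For hypothesis~(a) I would pass to the Hessian, since for a $C^2$ functional the stated strong-convexity inequality with modulus $\alpha$ is equivalent to $\nabla^2 J(v;p) \succeq \alpha\,\mathrm{Id}$ for all $v$ and $p$. Differentiating the gradient gives
\[
  \nabla^2 J(v;p) = B + (p_1+2)\big[(v^T v)\,\mathrm{Id} + 2\,vv^T\big].
\]
Because $p_1 \in [-1,1]$ forces $p_1+2 \ge 1 > 0$, and because $(v^T v)\mathrm{Id}$ and $vv^T$ are both positive semidefinite, the entire nonlinear contribution is positive semidefinite for every $v$ and $p$; hence $\nabla^2 J(v;p) \succeq B$ uniformly in $v$ and $p$. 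It then suffices to show $B$ is positive definite and to take $\alpha$ equal to its smallest eigenvalue. The main obstacle --- indeed the only substantive point --- is this positive definiteness of $B$. I would establish it not by computing eigenvalues but by recognising $B$ as ($1/R$ times) the Laplacian of the network graph with an additional grounding at node~$5$, which gives the energy identity
\[
  v^T B v = \frac{1}{R}\Big(\sum_{\text{edges}}(v_i - v_j)^2 + v_5^2\Big) \ge 0,
\]
vanishing only when all $v_i$ coincide and $v_5 = 0$, i.e.\ only at $v = 0$. Thus $B \succ 0$, and uniform strong convexity follows with $\alpha$ the least eigenvalue of $B$.

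Hypotheses~(c) and~(d) are then routine consequences of the compactness of $\cP$. For (c), $A(0;p) = 0$, so $A(0;p) - b(p) = -(p_2+25)f$ has norm $(p_2+25)\norm{f}$, which is bounded on $[-1,1]^2$ and hence square $\mu$-integrable over a finite measure space. For the integrability part of (d), for each fixed $v$ the map $p \mapsto J(v;p)$ is affine in $(p_1,p_2)$, thus continuous and bounded on the compact $\cP$, hence $\mu$-integrable. For the Lipschitz part of (d), since $b$ is independent of $v$ it suffices to bound $v \mapsto A(v;p)$; on a bounded set $\cS \subset \cU$ contained in a ball of radius $M$, the Jacobian $(v^T v)\mathrm{Id} + 2vv^T$ of the cubic map $v \mapsto (v^T v)v$ has operator norm $3\norm{v}^2 \le 3M^2$, so a mean-value estimate along the (convex) segments joining points of $\cS$ gives $\norm{(v^T v)v - (w^T w)w} \le 3M^2\norm{v-w}$. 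Combining this with $\norm{B(v-w)} \le \norm{B}\,\norm{v-w}$ and $p_1+2 \le 3$ produces the uniform Lipschitz constant $K = \norm{B} + 9M^2$, independent of $p$. Assembling (a)--(d) completes the verification.
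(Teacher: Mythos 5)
Your proof is correct, and its overall skeleton coincides with the paper's: both verify hypotheses (a)--(d) one by one, both reduce the uniform strong convexity in (a) to the Hessian bound $\nabla^2 J(v;p) = B + (p_1+2)\left[(v^Tv)\,\mathrm{Id} + 2vv^T\right] \succeq B$ (the paper's appendix writes $v^Tv\,\mathrm{Id} + v\otimes v$, dropping the factor $2$, a harmless typo), and both take $\alpha = \lambda_{\min}(B)$. There are two genuine differences in the details. First, the paper simply \emph{asserts} that $B$ is symmetric positive definite, whereas you prove it via the energy identity $v^TBv = \frac{1}{R}\bigl(\sum_{\text{edges}}(v_i-v_j)^2 + v_5^2\bigr)$, recognising $B$ as the Laplacian of the (connected) network graph grounded at node $5$; this closes a gap the paper leaves open, and your identity is easily checked against the row sums of $B$ (rows $1$--$4$ sum to zero, row $5$ to $1/R$). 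Second, for the Lipschitz property in (d) the paper performs an explicit algebraic decomposition of $\norm{v}_\cU^2 v - \norm{w}_\cU^2 w$ using the auxiliary operator $C = v\otimes v + v\otimes w + w\otimes v + w\otimes w$, arriving at the constant $K = \norm{B} + 3\norm{C} + 9D^2$; you instead bound the Jacobian $(v^Tv)\mathrm{Id} + 2vv^T$ of the cubic map by $3M^2$ on a ball of radius $M$ containing $\cS$ and apply the mean-value inequality along segments in that (convex) ball, obtaining the cleaner constant $K = \norm{B} + 9M^2$. Your route is shorter and arguably more transparent; the paper's is more elementary in that it avoids invoking the mean-value inequality for vector-valued maps, at the cost of a lengthier computation.
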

\begin{proof}
  See Appendix \ref{sec:proof_prop_elec_network}.
\end{proof}

The basis of the finite dimensional stochastic space $\cQ_m\subset
\cQ$ used for the approximation is chosen to be the multidimensional
Legendre polynomials $\seqt{\psi_j}{j=1}^m$ of total degree $d$ which
are orthogonal for the measure $\mu$. For the quadrature rule, we have
chosen a full tensorization of unidimensional Gauss-Legendre
quadrature with $(d+1)$ points, such that
the total number of quadrature points is $(d+1)^2$.

We can now directly apply the basic PGD procedure shown in Algorithm
\ref{alg:basic_pgd} and the improved algorithm
described in Algorithm \ref{algo:ni_aama} in order to find a low-rank
approximation of the solution map $\mathbf{u}$. The convergence of the
algorithms is controlled by a stagnation criterion. 

Concerning the basic PGD, the stagnation criterion is set to
  $10^{-2}$ with a maximum of $10$ iterations for each alternating
  minimization algorithm. The tolerance of the BFGS method is set to
  $10^{-10}$. For Problem \eqref{eq:critical_point_lambda}, the
  initialization of the preconditioner for the BFGS algorithm is the
  identity, and $B$ for Problem \eqref{eq:critical_point_v}.
  Moreover $\lambda$ is initialized with a vector full of ones and $v$
  with a vector full of $10^{-8}$. Being close to 0 is beneficial for
  later iterations, as the corrections will only slightly improve the
  approximation. However we observed that initializing $v$ to 0 may
  induce that the next problem on $\lambda$ becomes ill-conditioned
  due to the equality $\lambda \otimes 0 = 0$ for all $\lambda$. For
  the improved PGD, the stagnation criterion is set to
  $\max(10^{-(r+1)},10^{-8})$ with a maximum of $20$ iterations for
  the alternating minimization algorithm. Both algorithms are initialized with $u_0 = 0$.

The relative error is measured with respect to the norm
\begin{align}\label{eq:err_estim}
  \varepsilon(u_r) = \frac{\norm{u-u_r}_{\cQ\otimes\cU}}{\norm{u}_{\cQ\otimes\cU}} &= \sqrt{\frac{\int_\cP \norm{u(p)-u_r(p)}_\cU^2
  \mu(dp)}{\int_\cP \norm{u(p)}_\cU^2
  \mu(dp)}} \\ &\approx \sqrt{\frac{\sum_z w_z
  \norm{u(p_z)-u_r(p_z)}_\cU^2}{\sum_z w_z \norm{u(p_z)}_\cU^2}}, \nonumber
\end{align}
with $u$ the exact solution, $u_r$ the low-rank approximation and
using a fully tensorized Gauss-Legendre quadrature with a number of
points $20^2=400$. The deterministic solutions
$\seqt{\mathbf{u}(\mathbf{p}_z)}{}$ are computed using a modified
Newton algorithm where the tangent matrix is chosen to be the linear
part $\mathbf{B}$ of the Hessian of the functional $J$. The low-rank
approximations are also compared to the full-rank Galerkin
approximation computed with the block-Jacobi algorithm introduced in
\cite{Giraldi2013}, with a stagnation criterion of $10^{-10}$. The
comparison is made in Table \ref{table:elec_error} for total degrees $d =\ $2,3,4,5 and ranks
1,2,3,4,5 for the approximations.

\begin{table}[htbp]
\begin{center}
 \begin{tabular}{|c|c|c|c|c|}
   \hline & $d=2$ & $d=3$ & $d=4$ & $d=5$ \\ 
   \hline \hline \multicolumn{5}{|c|}{Block-Jacobi solver \cite{Giraldi2013}} \\ \hline
   & $5.14 \times 10^{-5}$ & $3.31 \times 10^{-6}$&   $2.31\times
   10^{-7}$ & $1.70\times 10^{-8}$ \\ \hline \hline
   \multicolumn{5}{|c|}{Basic PGD (Algorithm \ref{alg:basic_pgd})} \\ \hline
   $r=1$ & $2.34\times 10^{-3} $& $2.34\times 10^{-3} $&  $2.34\times 10^{-3}$&$  2.34\times 10^{-3} $\\ \hline
   $r=2$ & $9.67\times 10^{-5} $& $8.22\times 10^{-5} $&  $8.22\times 10^{-5}$&$  8.22\times 10^{-5} $\\ \hline
   $r=3$ & $5.14\times 10^{-5} $& $3.39\times 10^{-6} $&  $8.03\times 10^{-7}$&$  7.78\times 10^{-7} $\\ \hline
   $r=4$ & $5.14\times 10^{-5} $& $3.31\times 10^{-6} $&  $2.34\times 10^{-7}$&$  3.63\times 10^{-8} $\\ \hline
   $r=5$ & $5.14\times 10^{-5} $& $3.31\times 10^{-6} $&  $2.31\times 10^{-7}$&$  1.71\times 10^{-8} $\\
   \hline \hline
   \multicolumn{5}{|c|}{Improved PGD (Algorithm \ref{algo:ni_aama})} \\ \hline
   $r=1 $ & $ 2.34\times 10^{-3} $ & $  2.34\times 10^{-3} $ & $  2.34\times 10^{-3} $ & $  2.34\times 10^{-3} $\\ \hline
   $r=2 $ & $ 5.14\times 10^{-5} $ & $  3.31\times 10^{-6} $ & $  2.85\times 10^{-7} $ & $  1.95\times 10^{-7} $\\ \hline
   $r=3 $ & $ 5.14\times 10^{-5} $ & $  3.31\times 10^{-6} $ & $  2.31\times 10^{-7} $ & $  1.79\times 10^{-8} $\\ \hline
   $r=4 $ & $ 5.14\times 10^{-5} $ & $  3.31\times 10^{-6} $ & $  2.31\times 10^{-7} $ & $  1.79\times 10^{-8} $\\ \hline
   $r=5 $ & $ 5.14\times 10^{-5} $ & $  3.31\times 10^{-6} $ & $  2.31\times 10^{-7} $ & $  1.76\times 10^{-8} $\\ \hline
 \end{tabular}
\end{center}
\caption{Relative error for the approximation resulting from the
  block-Jacobi solver, the basic PGD and the improved algorithm for different total degrees $d$ and different
  $r$.}
\label{table:elec_error} 
\end{table}







We can observe from Table \ref{table:elec_error} that the low-rank
approximation gives a good approximation of the solution, even with a
rank one approximation. Moreover, for a rank greater than 3, the
low-rank approximation always gives results as good as the one of the
full-rank Galerkin approximation. Besides, in this example, we can
see that the greedy approximation gives satisfying results, even if
the result is not optimal compared to the approximation resulting from
a direct optimization in low-rank subsets.

For the rest of this section, we focus on $d=5$ and we measure the
efficiency of the different algorithms by counting the number of calls
to the residual $R(u_r(p_z);p_z) = b(p_z)-A(u(p_z);p_z)$. The results are
reported in Table \ref{table:elec_perf}.
\begin{table}[htbp]
\begin{center}
 \begin{tabular}{|c|c|c|c|c|c|}
   \hline & $r=1$ & $r=2$ & $r=3$ & $r=4$ & $r=5$ \\ \hline \hline
   \multicolumn{6}{|c|}{Basic PGD (Algorithm \ref{alg:basic_pgd})} \\ \hline
   Relative error & $  2.34\times 10^{-3} $ &$  8.22\times 10^{-5} $&$
   7.78\times 10^{-7} $&$  3.63\times 10^{-8} $&$  1.71\times 10^{-8}
   $\\\hline
   Residual calls &1044 &2160 &3096 &3816 &4464 \\ \hline \hline
   \multicolumn{6}{|c|}{Improved algorithm
     (Algorithm \ref{algo:ni_aama})} \\ \hline
   Relative error & $  2.34\times 10^{-3} $& $  1.95\times 10^{-7} $&
   $  1.79\times 10^{-8} $ & $  1.79\times 10^{-8} $&$  1.79\times 10^{-8} $ \\ \hline
   Residual calls &1044 &2304 &2700 &2844 &3024 \\ \hline
 \end{tabular}
\end{center}
\caption{Number of calls to the residual and corresponding relative
  error for different ranks $r$ for the basic PGD and the improved algorithm.}
\label{table:elec_perf} 
\end{table}

Both algorithms are similar at the beginning until $r=2$. When $r=3$,
Algorithm \ref{algo:ni_aama} becomes more efficient for computing
the low-rank approximation. However, if we compare with the
block-Jacobi solver, the latter one only requires $540$ calls to the residual.
This suggests that the classical algorithms for computing the low-rank
approximation of the solution of nonlinear equations must be
reconsidered in terms of efficiency and intrusivity and different
approaches must be proposed.

\subsection{Obstacle problem}
\label{sec:obstacle_problem}

We consider the obstacle problem introduced in
\cite{Cances2011}. A rope is clamped at its extremities over an
obstacle modeled by a function $g$, and a force $f$ is applied to the
rope. Noting $\Omega = (0,1)$, the vertical displacement of the rope
is modeled by the function $v:\Omega \times \cP \rightarrow \bR$. The force is set to be
constant $f \equiv 1$, and the obstacle is defined by 
\begin{align*}
  g(p;x) = p [\sin(3\pi x)]_+ + (p-1) [\sin(3\pi x)]_-, \quad \forall
  (x,p) \in \Omega \times \cP,
\end{align*}
where $p$ is uniformly distributed in $\cP = (0,1)$. The non
penetration condition (i.e. $v \ge g$) is taken into account with
a penalty formulation, with the penalty coefficient $\rho = 10^3$. The
reference solution $u$ can be found by solving
\begin{equation*}
  \min_{v \in L^2(\cP) \otimes H_0^1(\Omega)} J_\cP(v),
\end{equation*}
with
\begin{align*}
  J_\cP(v) = \int_\cP J(v(p);p)~\mu(\di p)
\end{align*}
and
\begin{align*}
  J(v;p) = \int_\Omega \frac{1}{2} \left(\frac{\partial}{\partial x}
      v(x)\right)^2 - f(p;x) v(x)+ \frac{\rho}{2} [v(x) - g(p;x)]_+^2~\di x.
\end{align*}
Following the proofs in \cite{Cances2011}, we can show that the assumptions of Theorem \ref{th:param_framework} are satisfied.
\begin{figure}[h]
  \centering
  \begin{subfigure}[b]{0.4\textwidth}
    \includegraphics[width=\textwidth]{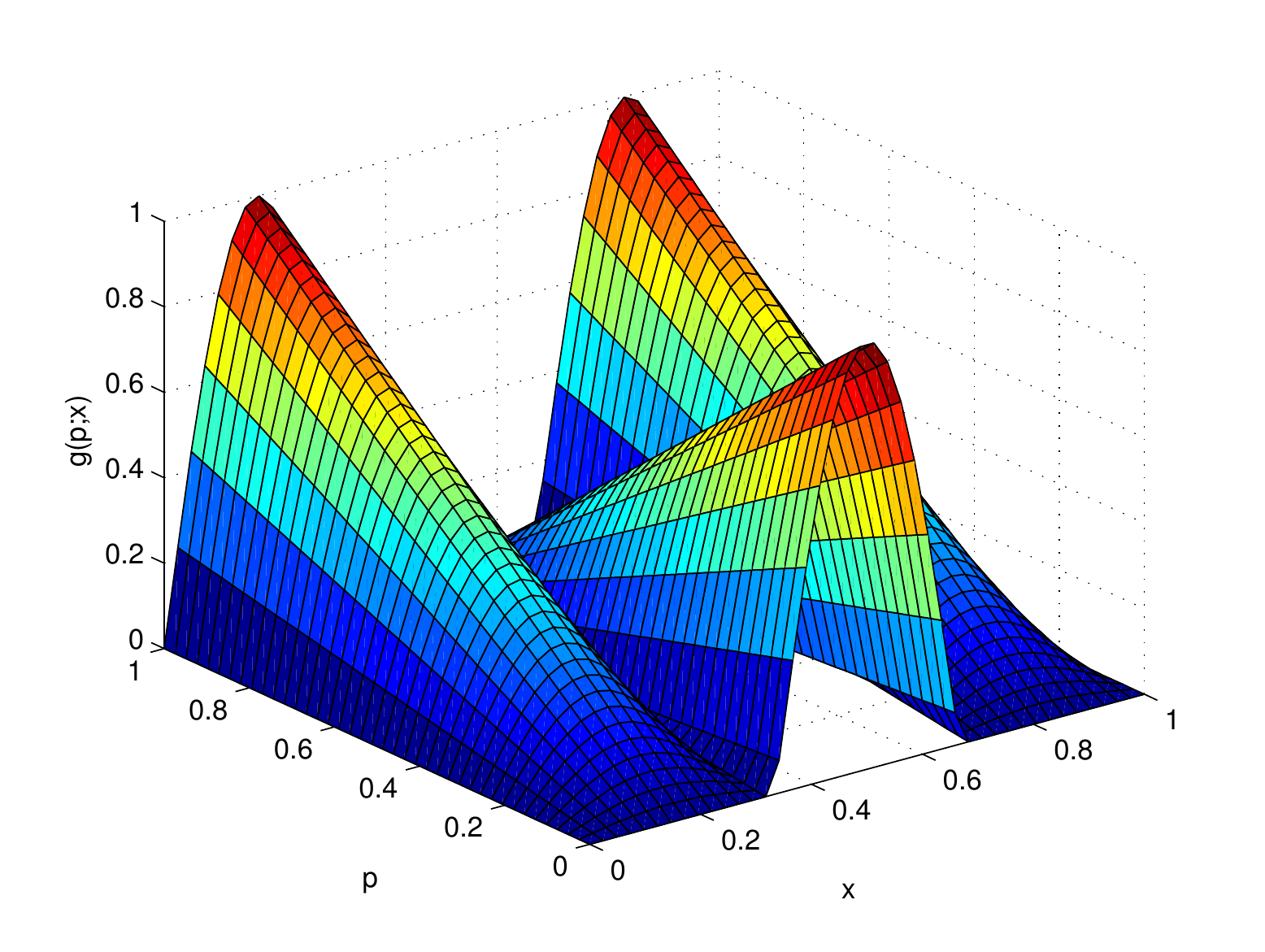}
    \caption{Obstacle: $g(p;x)$.}
    \label{fig:altitude_obstacle}
  \end{subfigure}
  ~
  \begin{subfigure}[b]{0.4\textwidth}
    \includegraphics[width=\textwidth]{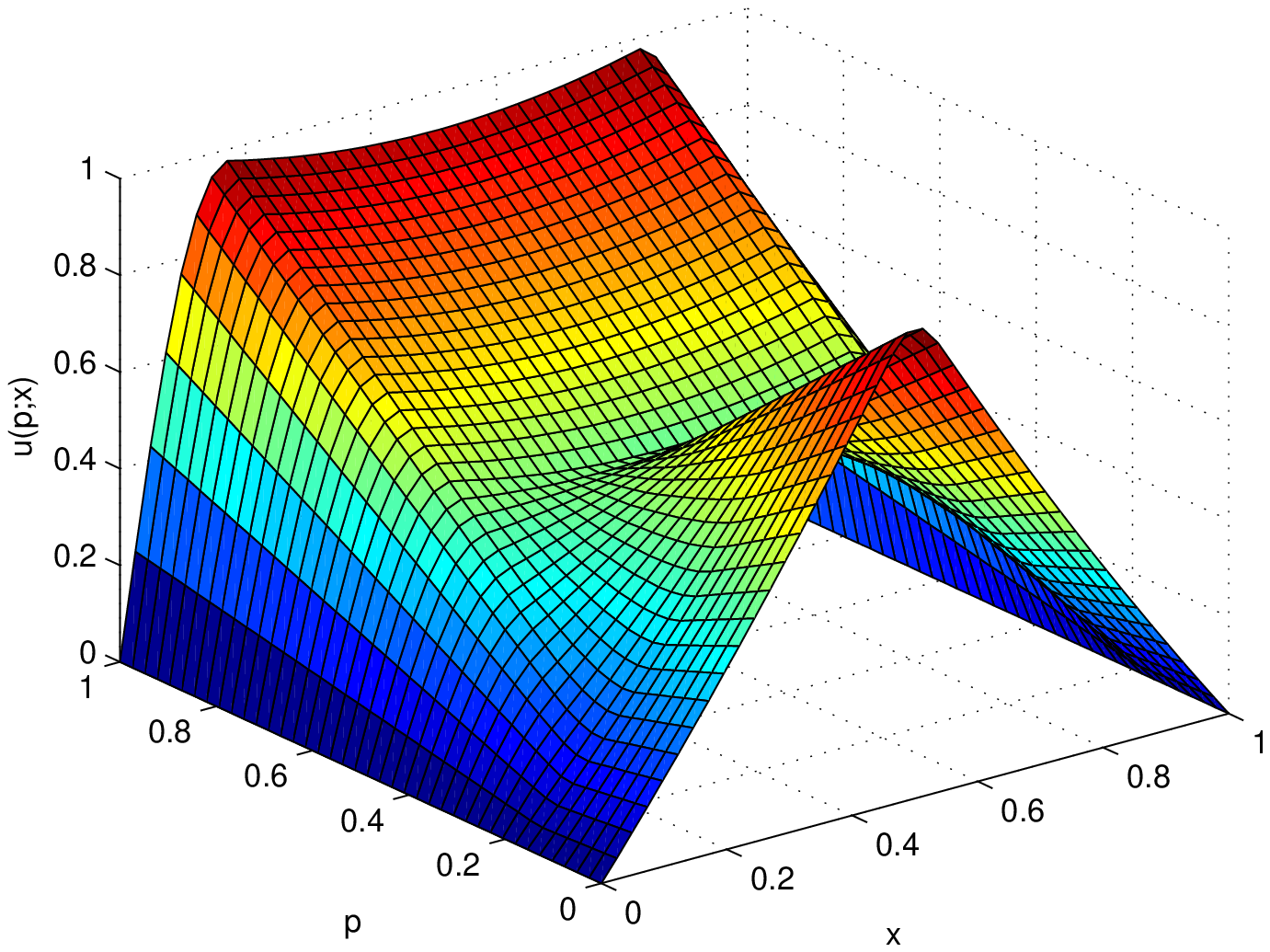}
    \caption{Solution: $u(p;x)$.}
    \label{fig:altitude_solution}
  \end{subfigure}
  \caption{Obstacle and solution as functions of $x$ and $p$ \cite{Cances2011}.}
  \label{fig:altitude_rope}
\end{figure}
The domain $\Omega$ is discretized with $40$ P1 finite elements, while
we use piecewise polynomials of degree 1 on $\cP$. 
The reference solution denoted by $u_{L^2}$ is computed via a
$L^2$-projection, where a BFGS method has been applied at each
quadrature point.



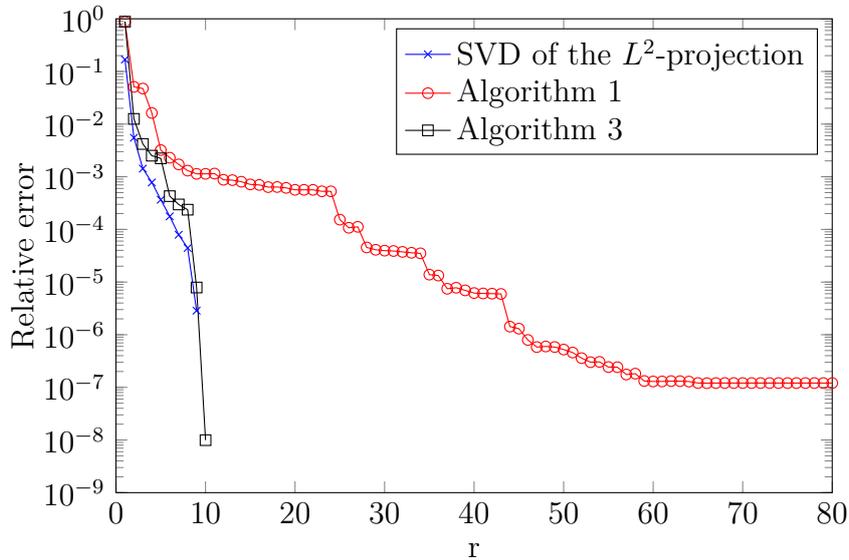
\begin{figure}[h]
  \centering
  \setlength{\figurewidth}{0.6\textwidth}
  \setlength{\figureheight}{0.4\textwidth}
%
%
\begin{tikzpicture}

\begin{axis}[%
width=\figurewidth,
height=\figureheight,
scale only axis,
xmin=0,
xmax=80,
xlabel={r},
ymode=log,
ymin=1e-09,
ymax=1,
yminorticks=true,
ylabel={Relative error},
legend style={draw=black,fill=white,legend cell align=left}
]
\addplot [color=blue,solid,mark=x,mark options={solid}]
  table[row sep=crcr]{
1	0.168518329811575	\\
2	0.00551386958425776	\\
3	0.00143008470234452	\\
4	0.000779778489331186	\\
5	0.000366196099551255	\\
6	0.000176902393123153	\\
7	7.91268433545271e-05	\\
8	4.43472539264875e-05	\\
9	2.85985855905648e-06	\\
10	0	\\
11	0	\\
12	0	\\
13	0	\\
14	0	\\
15	0	\\
16	0	\\
17	0	\\
18	0	\\
19	0	\\
20	0	\\
21	0	\\
22	0	\\
23	0	\\
24	0	\\
25	0	\\
26	0	\\
27	0	\\
28	0	\\
29	0	\\
30	0	\\
31	0	\\
32	0	\\
33	0	\\
34	0	\\
35	0	\\
36	0	\\
37	0	\\
38	0	\\
39	0	\\
};
\addlegendentry{SVD of the $L^2$-projection};

\addplot [color=red,solid,mark=o,mark options={solid}]
  table[row sep=crcr]{
1	0.882720635263839	\\
2	0.0509801525759161	\\
3	0.0473328818445309	\\
4	0.0162640514112494	\\
5	0.00321005187512831	\\
6	0.00230021685987786	\\
7	0.00171686476545333	\\
8	0.00130737280027339	\\
9	0.00113598150061694	\\
10	0.00113590195647984	\\
11	0.0011489533834088	\\
12	0.000876170592959449	\\
13	0.000862631882949828	\\
14	0.000804536989388703	\\
15	0.000715095669513579	\\
16	0.000699764634548391	\\
17	0.000635868229607089	\\
18	0.00063840381331367	\\
19	0.000615432239514108	\\
20	0.000564503034757394	\\
21	0.000564420559155581	\\
22	0.000564220900229845	\\
23	0.000528915181508766	\\
24	0.000528541588976925	\\
25	0.00015328862270715	\\
26	0.000107424031238404	\\
27	0.000111888223672965	\\
28	4.54530732167818e-05	\\
29	4.10481523941911e-05	\\
30	3.9521546807494e-05	\\
31	3.88954132269161e-05	\\
32	3.74123324266966e-05	\\
33	3.59886372514115e-05	\\
34	3.51187639989457e-05	\\
35	1.38379717000368e-05	\\
36	1.3286450791263e-05	\\
37	7.46236483439065e-06	\\
38	7.79966246555929e-06	\\
39	6.94915455085239e-06	\\
40	6.18069429462813e-06	\\
41	6.0753263777186e-06	\\
42	6.02236056754312e-06	\\
43	5.9388808859595e-06	\\
44	1.42574733733029e-06	\\
45	1.30360283030281e-06	\\
46	7.95345936044885e-07	\\
47	5.82256999967796e-07	\\
48	5.9774137785483e-07	\\
49	5.83632906961962e-07	\\
50	5.22615367259405e-07	\\
51	4.58829864330646e-07	\\
52	3.5924640524418e-07	\\
53	3.00638760301625e-07	\\
54	3.05107138253063e-07	\\
55	2.43075348188015e-07	\\
56	2.41571880578832e-07	\\
57	1.75376498448881e-07	\\
58	1.82159467043088e-07	\\
59	1.32068145493578e-07	\\
60	1.29339771544445e-07	\\
61	1.29464941672784e-07	\\
62	1.31420097751201e-07	\\
63	1.31128347429716e-07	\\
64	1.28790081269033e-07	\\
65	1.20878856081853e-07	\\
66	1.20043188547799e-07	\\
67	1.20691050934048e-07	\\
68	1.20749989203841e-07	\\
69	1.20731211713651e-07	\\
70	1.20761417293315e-07	\\
71	1.20746445157038e-07	\\
72	1.20745760962393e-07	\\
73	1.20693604314668e-07	\\
74	1.20697775550975e-07	\\
75	1.20628088231997e-07	\\
76	1.20641491272011e-07	\\
77	1.20634493324691e-07	\\
78	1.20638865228048e-07	\\
79	1.20637168768024e-07	\\
80	1.20635088570634e-07	\\
};
\addlegendentry{Algorithm \ref{alg:basic_pgd}};

\addplot [color=black,solid,mark=square,mark options={solid}]
  table[row sep=crcr]{
1	0.882720635263839	\\
2	0.012555548066394	\\
3	0.00419164611953392	\\
4	0.00251274693012477	\\
5	0.00223769445237738	\\
6	0.000428296605553534	\\
7	0.000297698796209564	\\
8	0.000236824343855339	\\
9	7.87863661841966e-06	\\
10	9.9371094928662e-09	\\
};
\addlegendentry{Algorithm \ref{algo:ni_aama}};

\end{axis}
\end{tikzpicture}%
  \caption{Relative error with respect to the rank of the
    approximation for different algorithms.}
  \label{fig:obs_l2_cvg_rank}
\end{figure}

We use the relative error introduced in Equation \eqref{eq:err_estim}
as an error estimate, where the solution $u$ has been replaced with
its $L^2$-projection $u_{L^2}$. In Figure \ref{fig:obs_l2_cvg_rank},
the relative error with respect to the rank is shown. Three
approximations are illustrated, the truncated SVD of the
$L^2$-projection, the basic PGD (Algorithm \ref{alg:basic_pgd}), and
the improved algorithm (Algorithm \ref{algo:ni_aama}). We used the
same parameters as in Section \ref{sec:electronic-network} for the
different algorithms, except for $C_0$ which is now constructed based
on $(\nabla J(u_r(0);0))^{-1}$. It is directly used in the basic PGD
case, and a block-diagonal version is constructed for the improved PGD
algorithm.

The basic PGD was the technique considered in
Cancès et al. \cite{Cances2011}, asserting in Section 6.2 that
``\emph{this procedure is intrusive in general}'', while we have shown
in this work that we can use a Galerkin approach in a non-intrusive
fashion for constructing a low-rank approximation of the solution.
Concerning the stopping criterion, we used the same as the one used in Section \ref{sec:electronic-network}.

The SVD supplies the best rank $r$ approximation of the reference
solution with respect to the canonical norm $\normt{\cdot}_{\cQ
  \otimes \cU}$. The basic PGD seems to slowly converge
toward the solution of the problem with respect to the canonical norm,
while the improved PGD has a similar convergence as
the truncated SVD, and finally yields a relative error of $10^{-8}$
with a rank 10 approximation.

\begin{figure}[h]
  \centering
  \setlength{\figurewidth}{0.6\textwidth}
  \setlength{\figureheight}{0.4\textwidth}
%
%
\begin{tikzpicture}

\begin{axis}[%
width=\figurewidth,
height=\figureheight,
scale only axis,
xmode=log,
xmin=10000,
xmax=1000000,
xminorticks=true,
xlabel={Residual calls},
ymode=log,
ymin=1e-09,
ymax=1,
yminorticks=true,
ylabel={Relative error},
legend style={draw=black,fill=white,legend cell align=left}
]
\addplot [color=red,solid,mark=o,mark options={solid}]
  table[row sep=crcr]{
49160	0.882720635263839	\\
69040	0.0509801525759161	\\
81560	0.0473328818445309	\\
91480	0.0162640514112494	\\
99320	0.00321005187512831	\\
108720	0.00230021685987786	\\
126600	0.00171686476545333	\\
142280	0.00130737280027339	\\
154760	0.00113598150061694	\\
174400	0.00113590195647984	\\
182560	0.0011489533834088	\\
202920	0.000876170592959449	\\
217400	0.000862631882949828	\\
230280	0.000804536989388703	\\
235880	0.000715095669513579	\\
250560	0.000699764634548391	\\
268120	0.000635868229607089	\\
280320	0.00063840381331367	\\
291240	0.000615432239514108	\\
299560	0.000564503034757394	\\
310920	0.000564420559155581	\\
321320	0.000564220900229845	\\
325360	0.000528915181508766	\\
341120	0.000528541588976925	\\
345920	0.00015328862270715	\\
358320	0.000107424031238404	\\
367920	0.000111888223672965	\\
381800	4.54530732167818e-05	\\
390040	4.10481523941911e-05	\\
403200	3.9521546807494e-05	\\
414960	3.88954132269161e-05	\\
424960	3.74123324266966e-05	\\
431600	3.59886372514115e-05	\\
438960	3.51187639989457e-05	\\
451720	1.38379717000368e-05	\\
464600	1.3286450791263e-05	\\
475080	7.46236483439065e-06	\\
485160	7.79966246555929e-06	\\
499040	6.94915455085239e-06	\\
501800	6.18069429462813e-06	\\
508600	6.0753263777186e-06	\\
515000	6.02236056754312e-06	\\
525480	5.9388808859595e-06	\\
536680	1.42574733733029e-06	\\
544880	1.30360283030281e-06	\\
548440	7.95345936044885e-07	\\
555040	5.82256999967796e-07	\\
561680	5.9774137785483e-07	\\
567960	5.83632906961962e-07	\\
570400	5.22615367259405e-07	\\
572280	4.58829864330646e-07	\\
578040	3.5924640524418e-07	\\
581360	3.00638760301625e-07	\\
582480	3.05107138253063e-07	\\
588400	2.43075348188015e-07	\\
591520	2.41571880578832e-07	\\
596200	1.75376498448881e-07	\\
599120	1.82159467043088e-07	\\
603120	1.32068145493578e-07	\\
604000	1.29339771544445e-07	\\
605480	1.29464941672784e-07	\\
607880	1.31420097751201e-07	\\
608800	1.31128347429716e-07	\\
610360	1.28790081269033e-07	\\
612080	1.20878856081853e-07	\\
614280	1.20043188547799e-07	\\
615240	1.20691050934048e-07	\\
616200	1.20749989203841e-07	\\
617080	1.20731211713651e-07	\\
617960	1.20761417293315e-07	\\
618800	1.20746445157038e-07	\\
619800	1.20745760962393e-07	\\
621480	1.20693604314668e-07	\\
622400	1.20697775550975e-07	\\
623800	1.20628088231997e-07	\\
624680	1.20641491272011e-07	\\
625480	1.20634493324691e-07	\\
626600	1.20638865228048e-07	\\
627520	1.20637168768024e-07	\\
628520	1.20635088570634e-07	\\
};
\addlegendentry{Algorithm \ref{alg:basic_pgd}};

\addplot [color=black,solid,mark=square,mark options={solid}]
  table[row sep=crcr]{
49160	0.882720635263839	\\
89520	0.012555548066394	\\
136960	0.00419164611953392	\\
197920	0.00251274693012477	\\
274160	0.00223769445237738	\\
336840	0.000428296605553534	\\
415920	0.000297698796209564	\\
500600	0.000236824343855339	\\
524760	7.87863661841966e-06	\\
529360	9.9371094928662e-09	\\
};
\addlegendentry{Algorithm \ref{algo:ni_aama}};

\end{axis}
\end{tikzpicture}%
  \caption{Relative error with respect to the
    number of calls to the residual for different algorithms.}
  \label{fig:obs_l2_cvg_time}
\end{figure}
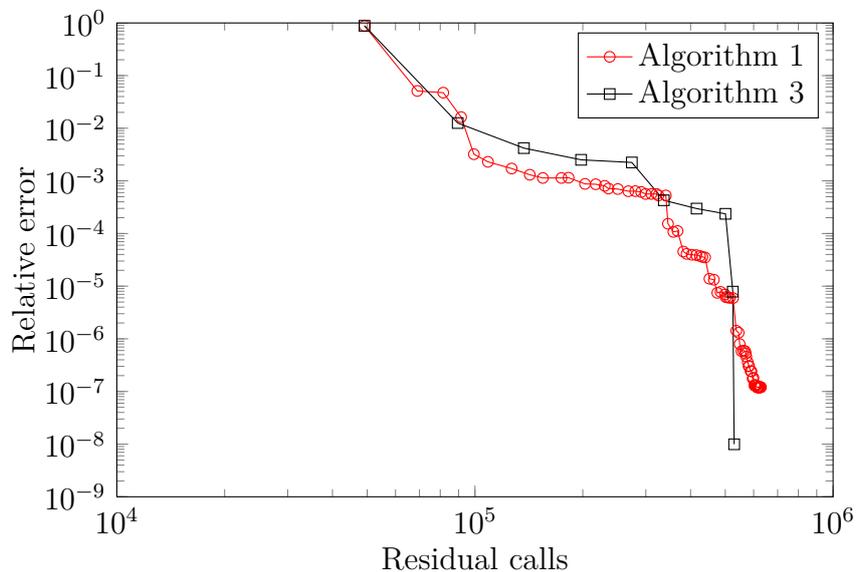

The number of calls to the residual is shown in Figure
\ref{fig:obs_l2_cvg_time} for Algorithms \ref{alg:basic_pgd} and
\ref{algo:ni_aama}. We observe similar speeds of convergence for both
algorithms, while the intrusive implementation of Algorithm
\ref{algo:ni_aama} is usually slower when $J$ is quadratric as it has
been observed in \cite{Giraldi2012}. Note that in this case, the
  basic PGD requires the construction of the computation of 80
  deterministic Hessians, to be compared to the 10 deterministic
  Hessians required by the improved PGD. Moreover, the
$L^2$-projection of the solution only requires 3304 calls to the
residual, which is much less than the number of calls required for the rank one
approximation of the solution.

This example illustrates that accurate low rank approximation of the
solution of nonlinear problems can be directly obtained in a non-intrusive fashion. However, the proposed construction is clearly not
efficient regarding the computational complexity and new
algorithms that are adapted to this non-intrusive setting are clearly required.

\section{Conclusion}
\label{sec:conclusion}

In this work, algorithms for the non-intrusive computation of low-rank
approximations of the solution of a nonlinear stochastic-parametric
problem associated with the minimization of a convex functional have
been proposed. The proposed approach relies on an alternating
minimization algorithm and the BFGS method for minimizing the partial
maps. The techniques are implemented in a non-intrusive way thanks to
the use of numerical integration, and only require the evaluations of
the standard residuum at some parameter values. The method has finally
been applied to two model examples.

The goal of this paper was to revisit standard techniques for
computing a low-rank approximation of the solution of a parametric
equation. The novelty lies in the non-intrusive implementation of
these algorithms. It results that the performance of the classical
methods for computing the low-rank approximation is modified due to the
expensive evaluation of samples (evaluations) of the residual for the numerical
integration.

In order to reduce computational complexity, the number of iterations
of the solver should be reduced to the minimum using more efficient
algorithms than an alternating minimization method plus a BFGS
algorithm. Indeed, one iteration of the solver corresponds to one
integration of the residual. Thus, at each iteration and for each
quadrature point, one sample of the residual must be evaluated.

Also, an adapted integration method could be used in order to further
reduce the number of evaluations of the residual for high dimensional
parametric problems. A first solution is to use an adaptive sparse
grid quadrature technique \cite{Bungartz2004} in order to reduce the
number of quadrature points. Another solution is to build an
approximation of the residual at each iteration. The costly
integration of the residual could then be replaced by a cheaper
approximation of this residual exploiting structured approximation
techniques.

This work is a first step toward Galerkin-based methods for the
low-rank approximation of the solution of parametric nonlinear
equations in a non-intrusive manner, offering new opportunities for
the development of efficient non-intrusive solvers.

\appendix

\section{Proof of Theorem \ref{th:param_framework}}
\label{sec:proof}

\paragraph{Existence and uniqueness of the solution.}

Given that $v\mapsto J(v;p)$ is strongly convex for all $p$ and continuous, there
exists a unique solution to \eqref{eq:param_prob}. As a consequence, we can
define the solution map by $u:p\mapsto u(p) = \argmin{v \in \cU}
J(v;p)$.

\paragraph{Regularity of the solution.}

$v\mapsto J(v;p)$ is Fréchet differentiable and strongly convex
uniformly in $p$. As a
consequence, there exists $\alpha > 0$ independent of $p$ such that
\begin{equation*}
  \inner{\nabla J(u(p);p) - \nabla J(v;p)}{u(p)-v}_\cU \ge \alpha
  \norm{u(p)-v}_\cU^2,\quad \forall p \in \cP,\ \forall v \in \cU.
\end{equation*}
Given that $\nabla J(u(p);p) = 0$, and taking $v=0$ we obtain
\begin{equation*}
  - \inner{\nabla J(0;p)}{u(p)}_\cU = -\inner{A(0;p)-b(p)}{u(p)}_\cU \ge \alpha \norm{u(p)}_\cU^2.
\end{equation*}
The Cauchy-Schwarz inequality gives
\begin{equation*}
  \norm{u(p)}^2_\cU \le \frac{1}{\alpha} \norm{A(0;p)-b(p)}_\cU \norm{u(p)}_\cU,
\end{equation*}
then
\begin{equation*}
  \norm{u(p)}^2_\cU \le \frac{1}{\alpha^2} \norm{A(0;p)-b(p)}_\cU^2
\end{equation*}
which yields $u\in L^2(\cP;\cU)$ using (c). In the following, the set $L^2(\cP;\cU)$ is
identified to the Hilbert space $\cQ\otimes\cU$ equipped with the induced product norm.

\paragraph{Characterization of the solution.}
\label{sec:appen_deriv}
Given that $p\mapsto J(v;p)$ is $\mu$-integrable, we can define the
functional $J_\cP:\cQ\otimes \cU \rightarrow \bR$ such
that
\begin{equation*}
  J_\cP(u) = \int_\cP J(u(p);p) \mu(\di p).
\end{equation*}

\begin{lemma}\label{lem:convex}
  $J_\cP$ is strongly convex.
\end{lemma}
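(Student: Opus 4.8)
The plan is to derive the strong convexity of $J_\cP$ directly by integrating, over $\cP$ against $\mu$, the pointwise strong convexity of $v\mapsto J(v;p)$ granted by assumption~(a). First I would fix two elements $u,w\in L^2(\cP;\cU)\cong\cQ\otimes\cU$ and a scalar $t\in[0,1]$. For $\mu$-almost every $p$ I apply hypothesis~(a) to the vectors $u(p)$ and $w(p)$, obtaining
\begin{equation*}
  J(tu(p)+(1-t)w(p);p)\le t\,J(u(p);p)+(1-t)\,J(w(p);p)-\frac{\alpha}{2}t(1-t)\norm{u(p)-w(p)}_\cU^2 .
\end{equation*}

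Next I would integrate this inequality over $\cP$ with respect to $\mu$; monotonicity of the integral preserves it, and three identifications close the argument. The left-hand side is exactly $J_\cP(tu+(1-t)w)$, since $(tu+(1-t)w)(p)=tu(p)+(1-t)w(p)$ pointwise; the first two terms on the right integrate to $t\,J_\cP(u)+(1-t)\,J_\cP(w)$ by the definition of $J_\cP$; and the last term, using that the $L^2(\cP;\cU)$ norm coincides with the canonical norm on $\cQ\otimes\cU$, becomes $-\tfrac{\alpha}{2}t(1-t)\norm{u-w}_{\cQ\otimes\cU}^2$. This yields
\begin{equation*}
  J_\cP(tu+(1-t)w)\le t\,J_\cP(u)+(1-t)\,J_\cP(w)-\frac{\alpha}{2}t(1-t)\norm{u-w}_{\cQ\otimes\cU}^2,
\end{equation*}
which is precisely strong convexity of $J_\cP$ with the \emph{same} constant $\alpha$.

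The only genuine obstacle is bookkeeping on integrability: the integrated inequality is meaningful only once $p\mapsto J(u(p);p)$ is known to be $\mu$-integrable for every $u\in L^2(\cP;\cU)$, so that $J_\cP$ is well defined and finite-valued. This is in fact already part of the framework set up in Theorem~\ref{th:param_framework}, where $J_\cP$ is introduced and $u$ is characterised as its minimiser. Concretely it follows from assumptions (c) and (d): the Lipschitz behaviour of $v\mapsto A(v;p)-b(p)$ forces at most quadratic growth of $J(\cdot;p)$, so that $|J(u(p);p)|$ admits a $\mu$-integrable majorant built from $|J(0;p)|$, $\norm{A(0;p)-b(p)}_\cU$ and $\norm{u(p)}_\cU$ --- each square-integrable by (c) and by $u\in L^2(\cP;\cU)$ --- while measurability of the integrands is inherited from that of $u$ and the continuity of $J(\cdot;p)$. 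With finiteness secured the integration step is routine, and since monotonicity of the integral transmits the pointwise estimate verbatim, the convexity modulus $\alpha$ is preserved exactly.
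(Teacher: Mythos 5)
Your proposal is correct and follows essentially the same route as the paper: both proofs apply the uniform pointwise strong convexity of $v\mapsto J(v;p)$ under the integral sign and identify the integrated remainder term with $-\tfrac{\alpha}{2}t(1-t)\norm{u-w}_{L^2(\cP;\cU)}^2$, preserving the constant $\alpha$. Your additional remarks on the integrability of $p\mapsto J(u(p);p)$ are sound bookkeeping that the paper handles implicitly when defining $J_\cP$, but they do not change the argument.
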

\begin{proof} According to the assumptions of Theorem \ref{th:param_framework}, $v\mapsto J(v;p)$ is strongly convex uniformly in $p$.
  We conclude that, for all $v,w$ in $\cQ\otimes \cU$,
  \begin{align*}
    J_\cP(tv+(1-t)w) &= \int_\cP J(tv(p)+(1-t)w(p);p)
    \mu(\di p) \\
    &\le \int_\cP \left(tJ(v(p);p)+(1-t)J(w(p);p) -\frac{\alpha}{2} t(1-t)
    \norm{v(p)-w(p)}_\cU^2\right) \mu(\di p) \\
    &\le t J_\cP(v)+(1-t)J_\cP(w) - \frac{\alpha}{2} t(1-t)
    \norm{v-w}_{L^2(\cP;\cU)}^2,
  \end{align*}
  and $J_\cP$ is strongly convex.
\end{proof}
\begin{lemma}
  $J_\cP$ is Gâteaux differentiable with Gâteaux derivative
  $\delta J_\cP(u)(\delta u)$ at $u\in L^2(\cP;\cU) = \cQ \otimes \cU$ in
  the direction $\delta u \in \cQ \otimes \cU$ given by
  \begin{equation*}
    \delta J_\cP(u)(\delta u) = \int_{\cP}\inner{\nabla J(u(p);p)}{\delta
      u(p)}_\cU \mu(\di p).
  \end{equation*}
\end{lemma}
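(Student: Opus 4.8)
The plan is to compute the Gâteaux derivative directly from its definition, writing
\[
\delta J_\cP(u)(\delta u) = \lim_{t\to 0}\frac{J_\cP(u+t\,\delta u)-J_\cP(u)}{t}
= \lim_{t\to 0}\int_\cP \frac{J(u(p)+t\,\delta u(p);p)-J(u(p);p)}{t}\,\mu(\di p),
\]
and then passing the limit inside the integral. For each fixed $p$, assumption (b) gives that $v\mapsto J(v;p)$ is Fréchet differentiable with $\nabla J(v;p)=A(v;p)-b(p)$, so the integrand converges pointwise, as $t\to 0$, to $\inner{A(u(p);p)-b(p)}{\delta u(p)}_\cU=\inner{\nabla J(u(p);p)}{\delta u(p)}_\cU$, which is exactly the claimed expression. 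Thus the entire content of the lemma is the justification of the interchange of limit and integral.

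First I would rewrite the difference quotient by the fundamental theorem of calculus applied to $s\mapsto J(u(p)+st\,\delta u(p);p)$, obtaining
\[
\frac{J(u(p)+t\,\delta u(p);p)-J(u(p);p)}{t}
= \int_0^1 \inner{A(u(p)+st\,\delta u(p);p)-b(p)}{\delta u(p)}_\cU\,\di s .
\]
This recasts the task as bounding the integrand uniformly in $t\in(0,1]$ and $s\in[0,1]$ by a single $\mu$-integrable function of $p$, after which dominated convergence finishes the argument.

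Next I would build that dominating function from assumptions (c) and (d). By Cauchy--Schwarz the integrand is at most $\norm{A(w;p)-b(p)}_\cU\,\norm{\delta u(p)}_\cU$ with $w=u(p)+st\,\delta u(p)$. Splitting $A(w;p)-b(p)=\bigl(A(w;p)-A(0;p)\bigr)+\bigl(A(0;p)-b(p)\bigr)$ and applying the Lipschitz bound (d) on a ball containing $0$ and $w$ yields $\norm{A(w;p)-b(p)}_\cU\le K\,\norm{w}_\cU+\norm{A(0;p)-b(p)}_\cU\le K\bigl(\norm{u(p)}_\cU+\norm{\delta u(p)}_\cU\bigr)+\norm{A(0;p)-b(p)}_\cU$. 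Multiplying by $\norm{\delta u(p)}_\cU$ and using that $u,\delta u\in L^2(\cP;\cU)$ together with the square-integrability of $p\mapsto A(0;p)-b(p)$ from (c), the resulting bound is $\mu$-integrable, so the interchange is legitimate and the limit equals $\int_\cP\inner{\nabla J(u(p);p)}{\delta u(p)}_\cU\,\mu(\di p)$.

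The hard part will be precisely this domination step: the segment from $u(p)$ to $u(p)+t\,\delta u(p)$ over which one needs the Lipschitz constant varies with $p$ and is not contained in one fixed bounded set, so one must verify that the pointwise bounds remain \emph{jointly} $\mu$-integrable rather than merely finite for each $p$. A cleaner route that avoids controlling the intermediate points is to exploit the convexity of $v\mapsto J(v;p)$: the difference quotient $\phi_p(t)$ is monotone in $t$ and, for $0<t\le t_0$, is sandwiched between the pointwise directional derivative and $\phi_p(t_0)$; since both envelopes are $\mu$-integrable (the former again via (c) and (d), the latter because $J_\cP$ is well defined at $u$ and $u+t_0\,\delta u$), dominated convergence applies without needing a bound uniform in $t$. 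Either way one obtains the stated formula for $\delta J_\cP(u)(\delta u)$.
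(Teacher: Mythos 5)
Your primary route is, step for step, the paper's own proof: it likewise rewrites the difference quotient $f_t(p)=\frac{1}{t}\left(J(u(p)+t\delta u(p);p)-J(u(p);p)\right)$ via the fundamental theorem of calculus, bounds it with Cauchy--Schwarz, splits off $\nabla J(0;p)=A(0;p)-b(p)$ so as to invoke the Lipschitz assumption (d), and concludes by dominated convergence with exactly your dominating function $\left(K\left(\norm{u(p)}_\cU+\norm{\delta u(p)}_\cU\right)+\norm{A(0;p)-b(p)}_\cU\right)\norm{\delta u(p)}_\cU$. The caveat you attach to the domination step is well taken, and in fact it applies verbatim to the paper: there assumption (d) is invoked with $\cS$ equal to ``the ball centered in $0$ of radius $\norm{u(p)+\delta u(p)}_\cU$'', a set that varies with $p$, after which the resulting $K$ is treated as a single constant; since (d) yields a $p$-uniform $K$ only for a \emph{fixed} bounded set, and $\norm{u(p)}_\cU$ is unbounded in $p$ for generic $u\in L^2(\cP;\cU)$, this is an unacknowledged gap in the paper's argument as much as in your first route. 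Your convexity-based alternative --- monotonicity of the difference quotient in $t$ and the sandwich between the pointwise directional derivative and $f_{t_0}$ for $0<t\le t_0$ --- is a genuinely different device, absent from the paper, and it does remove the need for a bound uniform in $t$; its weak spot is that the integrability of the lower envelope $p\mapsto\inner{\nabla J(u(p);p)}{\delta u(p)}_\cU$ is again derived from (c)--(d), hence meets the same $p$-dependent constant. You can close even that hole by taking as lower envelope $f_{-t_0}$, the difference quotient at $-t_0$, which is integrable as soon as $J_\cP(u-t_0\delta u)$ is finite: then both envelopes come from finiteness of $J_\cP$ alone, dominated convergence needs no Lipschitz estimate at all, and the integrability of the pointwise directional derivative falls out as a by-product instead of having to be established beforehand.
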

\begin{proof}
  Let $t \in (-1,1)\setminus\{0\}$ and $f_t(p)$ being defined by
  $$f_t(p)=\frac{1}{t} \left(J(u(p)+t\delta u(p);p) - J(u(p);p)\right).$$
  Thanks to the fundamental theorem of calculus and Cauchy-Schwarz
  inequality we have
  \begin{align*}
    f_t(p) &=
    \int_0^1 \inner{\nabla J(u(p)+\beta t \delta u(p);p)}{\delta
      u(p)}_\cU \di\beta \\
    &\le \int_0^1 \norm{\nabla J(u(p)+\beta t \delta u(p);p)}_\cU \norm{\delta
      u(p)}_\cU \di\beta.
  \end{align*}
  Given that $v\mapsto \nabla J(v;p) = A(v;p)-b(p)$ is Lipschitz on
  bounded set $\cS$ uniformly in $p$, with $\cS$ being the ball
  centered in $0$ of radius $\normt{u(p)+\delta u(p)}_\cU$, there exists $K>0$ such that
  \begin{align*}
    \norm{\nabla J(u(p)+\beta t \delta u(p);p)}_\cU-\norm{\nabla
      J(0;p)}_\cU &\le \norm{\nabla J(u(p)+\beta t \delta
      u(p);p)-\nabla J(0;p)}_\cU \\
    &\le K \norm{u(p)+\beta t \delta u(p)}_\cU  \\
    &\le K (\normt{u(p)}_\cU + \norm{\delta u(p)}_\cU),
  \end{align*}
  and finally
  \begin{align*}
    f_t(p) &\le
    \int_0^1 (K(\norm{u(p)}_\cU + \norm{\delta u(p)}_\cU)+\norm{\nabla
      J(0;p)}_\cU) \norm{\delta u(p)}_\cU \di \beta \\
    &\le \left(K(\norm{u(p)}_\cU + \norm{\delta u(p)}_\cU) + \norm{A(0;p)-b(p)}_\cU\right)
    \norm{\delta u(p)}_\cU = g(p).
  \end{align*}
  With $u$, $\delta u$ and $p\mapsto
  A(0;p)-b(p)$ being in $L^2(\cP;\cU)$, $g$ is in $L^1(\cP;\cU)$. We
  can thus applied the dominated convergence theorem and state that
  the limit $\delta J_\cP(u)(\delta u)=\lim_{t\rightarrow 0} \int_\cP
  f_t(p) \mu(\di p)$ exists and that
  \begin{align*}
    \delta J_\cP(u)(\delta u) = \int_{\cP}\inner{\nabla J(u(p);p)}{\delta
      u(p)}_\cU \mu(\di p).
  \end{align*}
\end{proof}
\begin{lemma}\label{lem:frechet}
  $J_\cP$ is Fréchet differentiable and, for all $u$, $\delta u$ in
  $L^2(\cP;\cU)$, we have
  \begin{equation*}
    \inner{\nabla J_\cP(u)}{\delta u}_{L^2(\cP;\cU)} = \int_{\cP}\inner{\nabla J(u(p);p)}{\delta
      u(p)}_\cU \mu(\di p).
  \end{equation*}
\end{lemma}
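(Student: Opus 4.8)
The plan is to upgrade the Gâteaux differentiability just established to Fréchet differentiability by showing that the Gâteaux derivative depends continuously on the base point, and then to invoke the classical fact that a functional which is Gâteaux differentiable in a neighbourhood of a point with a Gâteaux derivative that is continuous (as a map into the dual) is Fréchet differentiable there, the two derivatives coinciding. Since $\cU$ is identified with its dual and $L^2(\cP;\cU)\cong\cQ\otimes\cU$ is a Hilbert space, the preceding lemma identifies the Gâteaux derivative at $u$ with the element of $L^2(\cP;\cU)$ whose value at $p$ is $\nabla J(u(p);p)=A(u(p);p)-b(p)$. First I would check that this candidate gradient genuinely lies in $L^2(\cP;\cU)$: writing $\norm{A(u(p);p)-b(p)}_\cU\le\norm{A(u(p);p)-A(0;p)}_\cU+\norm{A(0;p)-b(p)}_\cU$ and bounding the first term by the Lipschitz assumption (d) and the second by the square-integrability assumption (c) gives square-integrability of $p\mapsto\nabla J(u(p);p)$, so $\nabla J_\cP(u)\in\cQ\otimes\cU$ is well defined.

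The heart of the argument is then the continuity, indeed the local Lipschitz continuity, of the map $u\mapsto\nabla J_\cP(u)$. I would estimate, for $u,w\in L^2(\cP;\cU)$,
\begin{equation*}
  \norm{\nabla J_\cP(u)-\nabla J_\cP(w)}_{L^2(\cP;\cU)}^2=\int_\cP\norm{A(u(p);p)-A(w(p);p)}_\cU^2\,\mu(\di p)\le K^2\norm{u-w}_{L^2(\cP;\cU)}^2,
\end{equation*}
where the pointwise bound $\norm{A(u(p);p)-A(w(p);p)}_\cU\le K\norm{u(p)-w(p)}_\cU$ comes from assumption (d). This Lipschitz bound is more than enough for the abstract criterion. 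Alternatively, and more self-containedly, one can avoid quoting the criterion and instead use that $J_\cP$ is Gâteaux differentiable along each segment to write the exact remainder
\begin{equation*}
  J_\cP(u+h)-J_\cP(u)-\inner{\nabla J_\cP(u)}{h}_{L^2(\cP;\cU)}=\int_0^1\inner{\nabla J_\cP(u+\beta h)-\nabla J_\cP(u)}{h}_{L^2(\cP;\cU)}\,\di\beta,
\end{equation*}
and bound it by Cauchy--Schwarz together with the above continuity estimate, obtaining a remainder of order $\norm{h}_{L^2(\cP;\cU)}^2=o(\norm{h}_{L^2(\cP;\cU)})$, which is exactly Fréchet differentiability with the claimed formula for $\nabla J_\cP$.

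The step I expect to require the most care is the passage from the pointwise Lipschitz estimate to the integral bound, because assumption (d) furnishes a Lipschitz constant $K$ only relative to a fixed \emph{bounded} set $\cS\subset\cU$, whereas the pointwise arguments $u(p)$ and $w(p)$ of an $L^2$ map need not remain in a common bounded subset of $\cU$ as $p$ ranges over $\cP$. To make the single-$K$ estimate rigorous I would either restrict attention to a bounded $L^2$-neighbourhood of the point of differentiation and a Nemytskii-type continuity argument (extract an a.e.-convergent subsequence dominated by an $L^2$ function, apply the local Lipschitz constant on the sublevel sets where the arguments are bounded, and conclude by dominated convergence), or else verify that the relevant values stay inside a ball large enough that its associated constant $K$ controls the integrand $\mu$-almost everywhere. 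This is the only genuinely technical point; once it is settled, the continuity of $\nabla J_\cP$ and hence the Fréchet differentiability follow immediately.
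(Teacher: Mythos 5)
Your proposal is correct and follows essentially the same route as the paper: the paper also proves Fr\'echet differentiability by showing that the G\^ateaux derivative map $u \mapsto \delta J_\cP(u)$ is (Lipschitz) continuous into the dual, using assumption (d) to get the bound $\left|\left(\delta J_\cP(u+\delta u)-\delta J_\cP(u)\right)(v)\right| \le K\norm{\delta u}_{L^2(\cP;\cU)}\norm{v}_{L^2(\cP;\cU)}$, which is the dual-pairing form of your norm estimate $\norm{\nabla J_\cP(u)-\nabla J_\cP(w)}_{L^2(\cP;\cU)} \le K\norm{u-w}_{L^2(\cP;\cU)}$; your alternative integral-remainder argument is a minor variation on the same idea. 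One remark worth making: the ``genuinely technical point'' you flag --- that assumption (d) only provides a Lipschitz constant on bounded subsets of $\cU$, while the pointwise values $u(p)$ of an $L^2$ map need not lie in any common bounded subset of $\cU$ --- is not resolved in the paper either; the paper's proof simply chooses a bounded set $\cS \subset \cQ\otimes\cU$ containing $u$ and $u+\delta u$ and then applies the pointwise constant $K$ under the integral, which is exactly the unjustified passage you identify, so your proposal is in this respect more careful than the published argument.
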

\begin{proof}
  The application $\delta J_\cP:L^2(\cP;\cU)\ni u\mapsto \delta J_\cP(u)\in L^2(\cP;\cU)^*$ is
  linear. Let $u$ and $\delta u$ in $L^2(\cP;\cU)$. Let $\cS \subset
  \cQ\otimes \cU$, be a bounded set containing $u$ and $u+\delta u$. Let $v\in L^2(\cP;\cU)$. There exists
  $K>0$ such that
  \begin{align*}
    \left|\left(\delta J_\cP(u+\delta u)-\delta J_\cP(u)\right)(v)\right| &= \left|\int_\cP
    \inner{\nabla J(u(p)+\delta u(p);p)-\nabla J(u(p);p)}{v(p)}_{\cU}
    \mu(\di p)\right|\\
  &\le \int_\cP \norm{\nabla J(u(p)+\delta u(p);p)-\nabla J(u(p);p)}_\cU
  \norm{v(p)}_\cU\mu(\di p)\\
  &\le \int_\cP K \norm{\delta u(p)}_\cU \norm{v(p)}_\cU\mu(\di p)\\
  &\le K\norm{\delta u}_{L^2(\cP;\cU)} \norm{v}_{L^2(\cP;\cU)}.
  \end{align*}
  It follows that $\delta J_\cP:L^2(\cP;\cU) \rightarrow
  L^2(\cP;\cU)^*$ is continuous. As a consequence, $J_\cP$ is
  Fréchet differentiable with Fréchet derivative
  \begin{equation*}
     \inner{\nabla J_\cP(u)}{\delta u}_{L^2(\cP;\cU)} =
     \delta J_\cP(u)(\delta u)= \int_{\cP}\inner{\nabla J(u(p);p)}{\delta
      u(p)}_\cU \mu(\di p).
  \end{equation*}
\end{proof}
According to Lemma \ref{lem:convex}, there exists an unique minimizer to
$J_\cP$. The combination of Lemmas \ref{lem:convex} and
\ref{lem:frechet} ensures that the minimizer is equivalently
characterized by
\begin{equation*}
  \inner{\nabla J_\cP(u)}{\delta u}_{L^2(\cP;\cU)} = \int_{\cP}\inner{A(u(p);p)-b(p)}{\delta
      u(p)}_\cU \mu(\di p) = 0,\quad \forall \delta u \in L^2(\cP;\cU).
\end{equation*}
Given that the map $u:p\mapsto \arg\min_{v\in\cU} J(v;p)$ is in
$L^2(\cP;\cU)$ and satisfies $A(u(p);p)-b(p)=0$ for all $p$, it is the
unique minimizer of $J_\cP$.

\section{Proof of Theorem  \ref{th:min_altern}}
\label{sec:proof-theorem-min-alt}

\paragraph{Existence and uniqueness of the solutions.}
\label{sec:exist-uniq-solut}
According to Lemma \ref{lem:convex}, $J_\cP$ is
strongly convex. Let $J_\cP^{\vek{v}}$ and
  $J_\cP^{\vek{\lambda}}$ denote $\vek{\lambda} \mapsto J_\cP^{\vek{v}}(\vek{\lambda})=J_\cP \circ
F_r(\vek{\lambda},\vek{v})$ and $\vek{v} \mapsto J_\cP^{\vek{\lambda}}(\vek{v})=J_\cP \circ
F_r(\vek{\lambda},\vek{v})$. For $t \in (0,1)$, given that $F_r$ is bilinear (Lemma \ref{lem:bilin_conti}) we have
\begin{align*}
  J_{\cP}^{\vek{\lambda}}(t \vek{v} + (1-t) \vek{w}) &= J_\cP(t
  F_r(\vek{\lambda},\vek{v}) +(1-t) F_r(\vek{\lambda},\vek{w})) \\
  & \le t J_\cP^{\vek{\lambda}}(\vek{v}) + (1-t)
  J_\cP^{\vek{\lambda}}(\vek{w}) - \frac{\alpha}{2} t(1-t)
  \norm{F_r(\vek{\lambda},\vek{v}-\vek{w})}_{\cQ \otimes \cU}^2.
\end{align*}
We have
\begin{align*}
  \norm{F_r(\vek{\lambda},\vek{v}-\vek{w})}_{\cQ \otimes \cU}^2 &=
  \sum_{i=1}^r \sum_{j=1}^r \inner{\lambda_i}{\lambda_j}_\cQ
  \inner{v_i-w_i}{v_j-w_j}_\cU \\ &= \sum_{i=1}^r
  \inner{v_i-w_i}{\sum_{j=1}^r\inner{\lambda_i}{\lambda_j}_\cQ
    (v_j-w_j) }_\cU \\ &= \inner{\vek{v}-\vek{w}}{G_{\vek{\lambda}}(\vek{v}-\vek{w})}_{\cU^r},
\end{align*}
$G_{\vek{\lambda}} \in \bR^{r\times r}$ being the Gram matrix associated to
$\vek{\lambda}$ and its application to $\vek{v}-\vek{w}$ being detailed in Section
\ref{sec:low-rank-appr}. If $\vek{\lambda}$ is a set of linearly
independent functions, then $G_{\vek{\lambda}}$ is symmetric positive
definite. It defines then an induced norm such that
$\normt{F_r(\vek{\lambda},\vek{v}-\vek{w})}_{\cQ \otimes
  \cU}^2=\normt{\vek{v}-\vek{w}}_{G_{\vek{\lambda}}}^2$ yielding that
$J_\cP^{\vek{\lambda}}$ is strongly convex. Note that when
$\vek{\lambda}$ is an orthonormal set, we have $\normt{F_r(\vek{\lambda},\vek{v}-\vek{w})}_{\cQ \otimes
  \cU}^2=\normt{\vek{v}-\vek{w}}_{\cU^r}^2$ and
$J_\cP^{\vek{\lambda}}$ has the same convexity constant than $J(\cdot;p)$.
Similarly, if $\vek{v}$ is a set of linearly independent vectors, $J_\cP^{\vek{v}}$ is strongly convex. As a consequence, there exists a
unique solution to the minimization problems
\begin{equation*}
  \min_{\vek{\lambda}\in\cQ^r} J_\cP^{\vek{v}}(\vek{\lambda})
  \quad \text{and} \quad \min_{\vek{v}\in\cU^r}  J_\cP^{\vek{\lambda}}(\vek{v}).
\end{equation*}

\paragraph{Characterization of the solutions.}
\label{sec:stat-cond-1}

$J_\cP$ and $F_r$ being Fréchet differentiable, $J_\cP^{\vek{v}}$ and
$J_\cP^{\vek{\lambda}}$ are Fréchet differentiable. Given that
$J_\cP^{\vek{v}}$ and $J_\cP^{\vek{\lambda}}$ are strongly convex, we
know that the solution to the minimization problems are uniquely
characterized by the equations
\begin{equation*}
  \inner{\nabla J_\cP^{\vek{v}}(\vek{\lambda})}{\delta \vek{\lambda}}_{\cQ^r} = 0,
  \quad \forall \delta \vek{\lambda} \in \cQ^r \qquad
  \text{and} \qquad \inner{\nabla J_\cP^{\vek{\lambda}}(\vek{v})}{\delta \vek{v}}_{\cU^r} = 0,
  \quad \forall \delta \vek{v} \in \cU^r.
\end{equation*}
In Appendix \ref{sec:appen_deriv}, we established that 
\begin{align*}
  \inner{\nabla J_\cP(u)}{\delta u}_{L^2(\cP;\cU)} =
  \int_\cP\inner{A(u(p);p)-b(p)}{\delta u(p)}_\cU \mu(\di p), \quad
  \forall u,\delta u \in \cQ \otimes \cU.
\end{align*}
Using the chain rule, we find that 
\begin{align*}
  \inner{\nabla J_\cP^{\vek{v}}(\vek{\lambda})}{\delta
    \vek{\lambda}}_{\cQ^r} &= \int_\cP
  \inner{A(F_r(\vek{\lambda},\vek{v})(p);p)-b(p)}{F_r(\delta
    \vek{\lambda},\vek{v})(p)}_\cU \mu(\di p) \\
  \text{and} \quad \inner{\nabla J_\cP^{\vek{\lambda}}(\vek{v})}{\delta
    \vek{v}}_{\cU^r} &= \int_\cP
  \inner{A(F_r(\vek{\lambda},\vek{v})(p);p)-b(p)}{F_r(\vek{\lambda},\delta
    \vek{v})(p)}_\cU \mu(\di p).
\end{align*}

\section{Proof of Proposition \ref{prop:elec_network}}
\label{sec:proof_prop_elec_network}
\begin{enumerate}[(a)]
  \item Since $\mathbf{B}$ is
    symmetric positive definite, the functional $J$ is given by
    $J(v;p) = \frac{1}{2}
    v^T B v +
    \frac{1}{4}(p_1+2) (v^T v)^2 - (p_2+25) v^Tf$.
  \item $v\mapsto J(v;p)$ is clearly Fréchet differentiable. Moreover,
    the Hessian of $v\mapsto J(v;p)$ is given by $v\mapsto H(v;p) = B
    + (p_1+2)(v^Tv I + v\otimes v)$. We have $\delta v^T
    H(v;p) \delta v \ge \delta v^T B \delta v \ge \alpha \normt{\delta
      v}_\cU^2$, $\alpha$ being the smallest eigenvalue of $B$ which
    is independent of $p$.
  \item $p \mapsto -(p_2+25) f$ is integrable on $\cP$.
  \item $p\mapsto J(v;p)$ is clearly integrable on $\cP$. Concerning
    the Lipschitz continuity property, given a bounded set $\cS \subset
  \cU$, and given that $p_1 \le 1$, we have
  \begin{align*}
    \norm{\nabla J(v;p)-\nabla J(w;p)}_\cU = \norm{B(v-w) + (p_1+2) (\norm{v}_\cU^2 v -
    \norm{w}_\cU^2 w)}_\cU \\
  \le \norm{B} \norm{v-w}_\cU + 3 \norm{\norm{v}_\cU^2 v - \norm{w}_\cU^2 w}_\cU,
  \end{align*}
  for all $v,w \in \cS$, where $\norm{B}$ is the
  operator norm. We introduce the operator $C = (v\otimes v + v \otimes w + w
  \otimes v + w \otimes w)$. It is bounded since
  $\cS$ is bounded and we can define $\norm{C}$. We notice then
  \begin{align*}
    \norm{\norm{v}_\cU^2 v - \norm{w}_\cU^2 w}_\cU &=  \norm{C(v-w) + \norm{v}_\cU^2
    (v-w) - (\norm{v}_\cU - \norm{w}_\cU)(\norm{v}_\cU+\norm{w}_\cU)
    v}_\cU,\\
  &\le \left(\norm{C}+\norm{v}_\cU^2\right)\norm{v-w}_\cU  + |\norm{v}_\cU -
  \norm{w}_\cU|(\norm{v}_\cU+\norm{w}_\cU)\norm{v}_\cU, \\
  &\le \left(\norm{C}+\norm{v}_\cU^2 +
    (\norm{v}_\cU+\norm{w}_\cU)\norm{v}_\cU \right)\norm{v-w}_\cU,
\end{align*}
  using the inequality $|\norm{v}_\cU - \norm{w}_\cU| \le
  \norm{v-w}_\cU$. For a given bounded set $\cS \subset \cU$, we
  denote by
  $D = \sup_{v \in \cS} \norm{v}_\cU$. With $K = \norm{B}+ 3 \norm{C} + 9 D^2 > 0$, we
  finally deduce
  \begin{align*}
    \norm{\nabla J(v;p)-\nabla J(w;p)}_\cU \le K\norm{v-w}_\cU, \qquad \forall
    v,w \in \cS,
  \end{align*}
  with $K$ independent of $p$.
\end{enumerate}

\bibliographystyle{abbrv}

\end{document}